\author {Beno\^\i{}t Collins}
\address{D\'epartement de Math\'ematique et Statistique, Universit\'e d'Ottawa,
585 King Edward, Ottawa, ON, K1N6N5 Canada, \newline \indent
WPI AIMR, Tohoku, Sendai, 980-8577 Japan
\newline \indent 
CNRS, Institut Camille Jordan Universit\'e  Lyon 1,
France}
\email{bcollins@uottawa.ca}
\author{Piotr \'Sniady}
\address{Zentrum Mathematik, M5,
Technische Universit\"at M\"unchen, \linebreak
Boltzmannstrasse 3,
85748 Garching, Germany \newline \indent
Instytut Matematyczny, Polska Akademia Nauk, \linebreak
\mbox{ul.~\'Sniadec\-kich 8,} 00-956 Warszawa, Poland
 \newline
\indent 
Instytut Matematyczny,
Uniwersytet Wroc\l{}awski,  \mbox{pl.~Grunwaldzki~2/4,} 50-384
Wroclaw, Poland
} 
\email{piotr.sniady@tum.de, piotr.sniady@math.uni.wroc.pl}
\title[Asymptotic fluctuations of representations of unitary
groups]{Asymptotic fluctuations of \\ representations of the unitary groups}
\keywords{Asymptotic representation theory, random matrices, representations of the unitary groups,
free probability theory}
\subjclass[2010]{
22E46  
(Primary)
60B20, 
46L54, 
34L20  
(Secondary)
}
\theoremstyle{plain}
\newtheorem{lemma}{Lemma}[section]
\newtheorem{theorem}[lemma]{Theorem}
\newtheorem{proposition}[lemma]{Proposition}
\newtheorem{corollary}[lemma]{Corollary}
\theoremstyle{definition}
\theoremstyle{remark}
\newtheorem{remark}[lemma]{Remark}
\newtheorem{problem}[lemma]{Problem}
\newcommand{\A}{{\mathfrak{A}}}
\newcommand{\E}{{\mathbb{E}}}
\newcommand{\C}{{\mathbb{C}}}
\newcommand{\R}{{\mathbb{R}}}
\newcommand{\Z}{{\mathbb{Z}}}
\newcommand{\g}{{\mathfrak{g}}}
\newcommand{\uu}{{\mathfrak{u}}}
\newcommand{\su}{{\mathfrak{su}}}
\newcommand{\gl}{{\mathfrak{gl}}}
\newcommand{\U}{{\mathfrak{U}}}
\newcommand{\V}{\mathcal{V}}
\newcommand{\W}{\mathcal{W}}
\newcommand{\El}{{\mathcal{L}}}
\newcommand{\M}{{\mathbb{M}}}
\newcommand{\gwia}{^{\star}}
\newcommand{\Sy}[1]{\mathfrak{S}({#1})}
\newcommand{\momentNumber}{\mathfrak{m}}
\newcommand{\momentTensor}{\mathfrak{M}}
\newcommand{\len}{r}
\newcommand{\rank}{r}
\newcommand{\rankB}{s}
\newcommand{\spectralNatural}{{\mu}}
\newcommand{\spectralNaiveShifted}{{\widehat{\mu}}}
\newcommand{\naiveMatrix}{X}
\newcommand{\naturalMatrix}{Y}
\newcommand{\genericMatrix}{Z}
\DeclareMathOperator{\trace}{tr} 
\newcommand{\trV}{\trace_V}
\newcommand{\trn}{\trace_n}
\newcommand{\trVn}{\trace_{V_n}}
\DeclareMathOperator{\Tr}{Tr}
\DeclareMathOperator{\Var}{Var}
\DeclareMathOperator{\End}{End} 
\DeclareMathOperator{\dimm}{dim}
\DeclareMathOperator{\Id}{Id}
\DeclareMathOperator{\Ad}{Ad}
\DeclareMathOperator{\Cov}{Cov}
\DeclareMathOperator{\degg}{deg}
\newcommand{\naive}{na\"{\i}ve\xspace}
\begin{document}

\begin{abstract}
We study asymptotics of representations of the unitary groups $U(n)$
in the limit as $n$ tends to infinity and we show that in many aspects they behave like
large random matrices. In particular, we prove that the highest weight of a random
irreducible component in the Kronecker tensor product of two irreducible
representations behaves asymptotically in the same way as the spectrum of the
sum of two large random matrices with prescribed eigenvalues. This agreement
happens not only on the level of the mean values (and thus can be described
within Voiculescu's free probability theory) but also on the level of
fluctuations (and thus can be described within the framework of higher order
free probability).
\end{abstract}

\maketitle

\section{Introduction}
\label{sec:introduction}

\subsection{Asymptotics of representations of the unitary groups} 
\label{subsec:intro-intro}
In general, questions concerning representations of the unitary groups
$U(n)$ and manipulations with them, such as the
problem of decomposing the Kronecker tensor product of two irreducible
representations into a sum of irreducible components, have a well-known answer
given by algorithms involving some combinatorial objects, such as Young
tableaux \cite{Fulton1997}, weights \cite{FultonHarris,Brocker-tomDieck} or Littelmann paths \cite{Littelmann95}. 
However, in the limit $n\to\infty$, due to the computational complexity of such algorithms, is is very difficult to obtain relevant information about representations.
It is therefore natural to ask for some partial or approximate answers which would be useful and meaningful asymptotically. For similar problems in relation to the symmetric groups, we refer to the work of Biane
\cite{Biane1998}.

The first result in this direction is due to Biane \cite{Biane95}. He proved
that a typical irreducible component of a representation of the unitary group
$U(n)$ resulting from some natural representation-theoretic operations can be
asymptotically described in the language of
\emph{Voiculescu's free probability theory} \cite{VoiculescuDykemaNica}. This highly
non-com\-mu\-ta\-tive probability theory was known to describe the asymptotic
behavior of \emph{large random matrices} \cite{Voiculescu1991}.

In this paper, we revisit the work of Biane \cite{Biane95},
and give a conceptual explanation of the fact that
both representations and
random matrices are asymptotically described by Voiculescu's free probability.
Namely, we show that 
\emph{representations behave
asymptotically in the same way as large random matrices}. This equality of asymptotics 
concerns not only the
mean value (as in the original work of Biane \cite{Biane95}) but also 
fluctuations around the mean values. 
Our results are naturally expressed within the context of \emph{higher order free probability} \cite{HigherOrderFreeness1,HigherOrderFreeness2,HigherOrderFreeness3}
which was developed as a framework capable of describing fluctuations of random
matrices in an abstract manner.
Our above mentioned results reduce the original problem of the asymptotics of
representations of the unitary groups to the better and more widely understood
problem of large random matrices spectra. 

We also show that the technical assumption from the original paper of Biane
\cite{Biane95} concerning the speed of growth of a typical highest weight can be
significantly weakened.

The main method of proof is to associate to a representation of the unitary group 
$U(n)$ a certain \emph{$n\times n$ random matrix with non-commutative entries} and to show that under
some mild assumptions, this non-commutativity asymptotically tends to zero.
Hence, for $n\to\infty$ it can be regarded as a 
classical random matrix. 
A very similar approach was used in our
previous paper \cite{Collins'Sniady2006} in order to study asymptotics of
representations of a fixed compact Lie group.

In the remaining part of this section we introduce the basic notations and
present in more detail the main results of the paper.

\subsection{Representations and shifted weights for the unitary group}
We will use only some basic facts about Lie groups, Lie algebras and their representations. The books \cite{FultonHarris,Brocker-tomDieck} are good references to this topic. 
All representations considered in this paper are assumed to be finite-dimensional.

Any irreducible representation of the unitary group $U(n)$ is uniquely determined 
up to  equivalence 
by its \emph{highest weight} $\lambda$, 
which can be identified with a vector $\lambda=(\lambda_1\geq \cdots\geq \lambda_n)\in\Z^n$.
We define the \emph{shifted highest weight} $l=(l_1>\cdots>l_n)\in\Z^n$ by 
\[ l_i:=\lambda_i+(n-i).\]
For the purposes of this article it is more convenient to index irreducible representations
by their shifted highest weights instead of the usual highest weights;
for this reason we use the symbol $\rho_l$ to denote the corresponding
irreducible representation. 

The representation $\rho_l$ of the Lie group $U(n)$ gives rise (by differentiating in the identity) 
to a representation of the corresponding \emph{Lie algebra $\uu(n)$ of antihermitian matrices}.
We denote this representation by the same symbol  $\rho_l$. 
Since the Lie algebra $\uu(n)$ is not semisimple, it has irreducible representations other than $\rho_l$ over  $l=(l_1>\cdots>l_n)\in\Z^n$. 
However, since any representation of the Lie algebra $\uu(n)$ which will be considered in this paper corresponds to some 
representation of the Lie group $U(n)$, this will not create any difficulties.
Alternatively, one could consider rather the group $SU(n)$ and the corresponding semisimple Lie algebra $\su(n)$.

\subsection{The \naive random matrix associated to a representation}

To an irreducible representation $\rho=\rho_l$ of $U(n)$ (or, to an irreducible representation $\rho=\rho_l$ of Lie algebra $\uu(n)$) we associate a random matrix  
\begin{equation}
\label{eq:random-matrix}
\naiveMatrix=\naiveMatrix(\rho):=U \begin{bmatrix} l_1 &  &  \\ & \ddots & \\ & 
& l_n
\end{bmatrix} U^{-1}, 
\end{equation}
where $U$ is a random unitary matrix, distributed according to the Haar
measure on $U(n)$. Another way of defining this random matrix is to say that its
distribution is the uniform measure on the manifold of all hermitian matrices
with the eigenvalues specified by the shifted weight $l$.
We will call $X(\rho)$ the \emph{\naive random matrix associated to $\rho$}.
The terminology `\naive' here is introduced in order to distinguish this random matrix from the one which will be introduced in Section \ref{subsec:random-matrix-natural}.

If a representation $\rho$ is reducible, we consider its decomposition into
irreducible components
\[\rho=\bigoplus_{l\in\Z^l} n_l \cdot \rho_l,\]
where $n_l\in\{0,1,\dots\}$ denotes the multiplicity and we consider a
probability measure on the set of all shifted weights given as follows:
\begin{equation}
\label{eq:probability-on-weights}
 P(l):= \frac{n_l\cdot \text{(dimension of
$\rho_l$)}}{\text{(dimension of $\rho$)}}. 
\end{equation}
To such a reducible representation $\rho$, we associate a random matrix 
$\naiveMatrix(\rho)$ given by \eqref{eq:random-matrix}, where---as before---$U$ is a random unitary matrix distributed 
according to the Haar measure on $U(n)$, but $l$ should be now an independent random variable with the 
distribution given by \eqref{eq:probability-on-weights}.

The \naive random matrix $\naiveMatrix=\naiveMatrix(\rho)$ contains all information (up to ampliation) about the 
decomposition of the representation $\rho$ into irreducible components. 
In our previous paper \cite{Collins'Sniady2006}, we
studied applications of this matrix in the study of the asymptotics of
representations of a fixed unitary group $U(n)$ (and
of any fixed compact Lie group). In this article we focus on asymptotics of
representations $\rho_n$ of the unitary groups $U(n)$ in the limit $n\to\infty$.
Therefore, we will have to replace the random matrix $\naiveMatrix$ by a sequence of random
matrices $\big(\naiveMatrix(\rho_n) \big)$ with their sizes tending to infinity.

\subsection{The canonical random matrix with non-commutative entries associated to a representation}
\label{subsec:random-matrix-natural}

Let $\rho:\uu(n)\rightarrow\End(V)$ be a representation of the Lie algebra $\uu(n)$ (in the case when $\rho$ is a 
representation of Lie group $U(n)$ we replace $\rho$ by the corresponding representation of the Lie algebra). 
We associate to $\rho$ the following matrix
\[ \naturalMatrix(\rho) := 
\begin{bmatrix}
\rho(e_{11}) & \dots  & \rho(e_{1n}) \\
\vdots       & \ddots & \vdots       \\
\rho(e_{n1}) & \dots  & \rho(e_{nn}) 
\end{bmatrix} \in \M_n(\C) \otimes \End(V),\]
where $e_{ij}\in \M_n(\C) = \uu(n) \otimes_\R \C$ are the matrix units. 
We say that $\naturalMatrix(\rho)$ is the \emph{natural random matrix (with non-commutative entries) associated to $\rho$} 
(we postpone the exact definition of \emph{non-commutative random variables} to Section \ref{subsec:noncommutative-probability}).
We will discuss some fine details of this construction in Section \ref{subsec:representation-as-random-matrix}. 

This matrix plays a crucial role in our approach; it was first introduced by Biane \cite{Biane1998} in the context of the 
representation theory of the symmetric groups, see also the work of Kuperberg \cite{Kuperberg2002}.

\subsection{The main result}

The main result of this paper can be stated as follows:
\begin{theorem}
\label{theo:main_theorem}
Let $(\varepsilon_n)$ be a sequence of real numbers such that $\varepsilon_n=
o\left(\frac{1}{n}\right)$.
For each $n$, let $\rho_n$ be a representation of the unitary group $U(n)$. 

Then, the corresponding sequence of rescaled \emph{natural} random
matrices  $\big(\varepsilon_n \naturalMatrix(\rho_n)\big)$ converges in distribution  if and only if the sequence of rescaled 
\emph{\naive} random matrices
$\big(\varepsilon_n \naiveMatrix(\rho_n)\big)$ converges in distribution. 
In both cases the convergence is to be understood in the sense of
\emph{higher order free probability}. If the limits exist, they are equal. 
\end{theorem}

In particular, this theorem means that we can connect the `\naive' random matrix associated to a representation with its `natural' counterpart, and this provides a conceptual framework in which one can explain the similar behavior of representations and random matrices in the limit of large dimension.

The above theorem is stated more precisely and proved
 as Theorem \ref{theo:main_theorem-RELOADED}, after appropriate
notation is introduced.
In Section \ref{subsec:concrete-applications} of this introduction, we show some concrete applications of this abstract result to representation theory.

\subsection{Spectral measure for representations and random matrices}
\label{subsec:specral-measure-definition}
Let $\genericMatrix$ be an $n\times n$ hermitian random matrix, and
$l=(l_1\geq \cdots\geq l_n)\in\R^n$ the set of its
eigenvalues (counted with multiplicities); since $\genericMatrix$ is random,  
the vector $l$ of its eigenvalues is also random. We define the \emph{spectral
measure} of $\genericMatrix$ 
as the random probability measure on the real line
\begin{equation}
\label{eq:definition-spectral-measure}
\mu_\genericMatrix:= \frac{1}{n} \sum_{i} \delta_{l_i}.
\end{equation}

For an irreducible representation $\rho=\rho_l$ of $U(n)$ corresponding to
the shifted highest weight $l=(l_1> \cdots > l_n)\in\Z^n$ (or, for an irreducible representation $\rho=\rho_l$ of 
the Lie algebra $\uu(n)$), we define its
\emph{\naive spectral measure}
\begin{equation}
\label{eq:spectral-measure-duh}
\spectralNaiveShifted_\rho= \spectralNaiveShifted_l:= \frac{1}{n} \sum_{i}
\delta_{l_i} 
\end{equation}
which is a deterministic probability measure on $\R$. 

If $\rho$ is a reducible representation, we define its \naive spectral measure
$\spectralNaiveShifted_\rho$ by the
same formula \eqref{eq:spectral-measure-duh}, however now $l$ should be understood as  a \emph{random}
shifted highest weight as defined by \eqref{eq:probability-on-weights}. In this
case $\spectralNaiveShifted_\rho$ becomes a \emph{random} probability measure on $\R$.

The \naive spectral measure $\spectralNaiveShifted_\rho$
is nothing else but the spectral measure of the \naive random matrix $\naiveMatrix(\rho)$ associated to $\rho$.

If $\mu$ is a probability measure on $\R$ and $\varepsilon$ is a real number, we
 denote by $D_{\varepsilon}\mu$ the \emph{dilation} of the measure $\mu$.
It is the distribution of the random variable $\varepsilon Z$, where $Z$
is a random variable with the distribution $\mu$. We use the notational
shorthands
\begin{align*}
 \varepsilon l  & := (\varepsilon l_1,\dots,\varepsilon_n l_n) \qquad 
\text{for } l=(l_1,\dots,l_n), \\
 \spectralNaiveShifted_{\varepsilon \rho} & := D_{\varepsilon}
\spectralNaiveShifted_{\rho}, \\
 \naiveMatrix(\varepsilon \rho) & := \varepsilon \naiveMatrix(\rho).
\end{align*}

\subsection{Gaussian fluctuations of measures}
\label{subsec:gassian-fluctuations-measures}
Let $(\mu_n)$ be a sequence of random probability measures on $\R$. 
We will say that the \emph{fluctuations of $(\mu_n)$ are asymptotically
Gaussian (with covariance decay $\frac{1}{n^2}$)} if the limit 
\begin{equation}  
\label{eq:limit-mean}
\lim_{n\to\infty} \E \int_\R z^\rank \ d\mu_n 
\end{equation}
exists for any integer $\rank\geq 1$ and the joint distribution of the family of centered random variables 
\begin{equation}
\label{eq:limit-fluctuation}
\Bigg\{ n \left( \int_\R z^\rank \ d\mu_n - \E \int_\R z^\rank \ d\mu_n \right)  \Bigg\}_{r=1,2,3,\dots} 
\end{equation}
converges in moments to some Gaussian distribution (in the sense that the
distribution of any finite family converges). 

We say that two such sequences $(\mu_n)$, $(\mu_n')$ of random probability measures have
\emph{asymptotically the same Gaussian fluctuations (with covariance decay
$\frac{1}{n^2}$)} if they are asymptotically Gaussian in the above sense, their corresponding
limits \eqref{eq:limit-mean} are equal and the fluctuations
\eqref{eq:limit-fluctuation} converge to the same Gaussian limit.

\subsection{Applications of the main result}
\label{subsec:concrete-applications}

Let us now present a few concrete consequences of Theorem \ref{theo:main_theorem}. 
A more complete collection of its applications, together with the proofs, is given in 
 Section \ref{sec:applications}.

\subsubsection{Kronecker tensor product}
We start with a solution to the problem mentioned in the beginning of
Section \ref{subsec:intro-intro}, namely the decomposition of Kronecker tensor
products into irreducible components.

We recall that if $\rho_1:U(n)\rightarrow \End(V_1)$ and $\rho_2:U(n)\rightarrow \End (V_2)$ are representations of the same unitary group $U(n)$, their \emph{Kronecker tensor product}
$\rho_1\otimes \rho_2:U(n)\rightarrow \End(V_1 \otimes V_2)$ is a representation of $U(n)$ defined by diagonal action
\[ (\rho_1\otimes \rho_2)(u) := \rho_1(u) \otimes \rho_2(u). \]

\begin{corollary}
\label{corollary:Kronecker-free}
Let $(\varepsilon_n)$ be a sequence of real numbers such that $\varepsilon_n=
o\left(\frac{1}{n}\right)$. For each $i\in\{1,2\}$ and $n\geq 1$
let $\rho^{(i)}_n$
be an irreducible representation of $U(n)$.
Assume that for each $i\in\{1,2\}$ the sequence
$\left(\spectralNaiveShifted_{\varepsilon_n \rho^{(i)}_n}\right)_{n=1,2,\dots}$ of the
(rescaled) \naive spectral measures converges in moments to some probability measure
$\mu^{(i)}$.

Then the (rescaled) \naive spectral measure $\spectralNaiveShifted_{\varepsilon_n
\left( \rho^{(1)}_n \otimes \rho^{(2)}_n\right)}$ of the Kronecker tensor product 
converges in moments almost surely to \emph{Voiculescu's free
convolution} $\mu^{(1)} \boxplus \mu^{(2)}$.
\end{corollary}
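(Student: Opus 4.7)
The plan is to reduce the corollary to Voiculescu's theorem on asymptotic freeness of independent unitarily invariant random matrices \cite{Voiculescu1991}, using Theorem~\ref{theo:main_theorem} as the bridge between representations and random matrices, and exploiting the tensor product structure on the representation side to produce the two free summands.

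First I would view $\rho^{(3)}_n = \rho^{(1)}_n \otimes \rho^{(2)}_n$ as a non-commutative $n\times n$ matrix acting on $V^{(1)}_n \otimes V^{(2)}_n$. The differential of a tensor product representation satisfies
$$d\rho^{(3)}_n(E_{ij}) = d\rho^{(1)}_n(E_{ij}) \otimes \Id + \Id \otimes d\rho^{(2)}_n(E_{ij}),$$
so $\varepsilon_n \rho^{(3)}_n$ decomposes as the sum of two non-commutative matrices whose entries lie in the mutually commuting subalgebras $\End(V^{(1)}_n) \otimes \Id$ and $\Id \otimes \End(V^{(2)}_n)$ of $\End(V^{(1)}_n \otimes V^{(2)}_n)$; these two subalgebras are tensor-independent with respect to the canonical normalized trace.

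Next I would apply Theorem~\ref{theo:main_theorem} to each factor $\rho^{(i)}_n$ individually. The hypothesis that $\spectralNaiveShifted_{\varepsilon_n \rho^{(i)}_n}$ converges to $\mu^{(i)}$ is precisely convergence of the spectral distribution of the classical random matrix $\varepsilon_n X(\rho^{(i)}_n)$, and the main theorem then yields that $\varepsilon_n \rho^{(i)}_n$, viewed as a non-commutative matrix, converges in higher order free probability to the same limit $\mu^{(i)}$. Informally, the non-commutativity of the entries of each factor tends to zero, so each factor becomes asymptotically equivalent to a Haar-conjugated diagonal matrix with spectrum $\mu^{(i)}$; the tensor-independence of the two factors on the representation side lets one realise their classical counterparts with independent Haar conjugations, and Voiculescu's theorem then delivers asymptotic freeness of the pair. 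Consequently $\varepsilon_n \rho^{(3)}_n$ converges in higher order free probability to $\mu^{(1)} \boxplus \mu^{(2)}$.

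Finally I would invoke Theorem~\ref{theo:main_theorem} in the opposite direction, applied to $\rho^{(3)}_n$, to transport this non-commutative convergence back to the classical random matrix $\varepsilon_n X(\rho^{(3)}_n)$, whose spectral measure is exactly $\spectralNaiveShifted_{\varepsilon_n \rho^{(3)}_n}$; this gives convergence in moments of the latter to $\mu^{(1)} \boxplus \mu^{(2)}$. The strengthening to almost sure convergence is where the second-order part of the framework is essential: second-order freeness provides $O(1/n^2)$ control on the variance of each fixed moment, after which a routine Borel--Cantelli argument upgrades convergence in expectation to almost sure convergence. The main technical obstacle lies in the third paragraph, where tensor-independence of the two non-commutative summands together with the asymptotic vanishing of their entries' non-commutativity must be shown to produce genuine asymptotic freeness; equivalently, one must verify that the joint (higher order) non-commutative moments of the pair of summands can be computed in the limit as if each factor were replaced by an independent unitarily invariant classical random matrix, which is the substantive use of Voiculescu's theorem.
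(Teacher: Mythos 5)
Your proposal follows essentially the same route as the paper: decompose $\rho^{(3)}_n$ via $\rho^{(3)}_n = \rho^{(1)}_n \otimes 1 + 1 \otimes \rho^{(2)}_n$ into a sum of two pieces with tensor-independent coordinates, apply the main theorem to each factor to match its higher-order cumulants with those of the corresponding classical random matrix, observe that the independent pair $\bigl(X^{(1)}_n, X^{(2)}_n\bigr)$ has the same tensor-independent structure so the joint cumulants of the two sums agree, transport back through the main theorem, and finish with Voiculescu's concentration result for the free convolution plus an $O(1/n^2)$-variance Borel--Cantelli argument for almost sure convergence. The paper packages the middle steps through Corollary~\ref{corollary:Kronecker-RM-stronger} and phrases the tensor-independence argument at the level of cumulants rather than ``asymptotic equivalence to Haar-conjugated diagonals,'' but the substance is the same.
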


Note that the almost sure convergence relies here on the fact that all random variables are defined on the same probability space.

A similar result was proved by Biane \cite{Biane95} under much stronger
assumptions on decay of $\varepsilon$, namely that
$\varepsilon=o\left(\frac{1}{n^\alpha}\right)$ for all values of the
exponent $\alpha$. 
 
Corollary \ref{corollary:Kronecker-free} is
formulated in terms of \emph{free additive convolution} which belongs to the language
of \emph{Voiculescu's free probability theory} \cite{VoiculescuDykemaNica}.
It can be strengthened by establishing a direct bridge with the theory
of unitarily invariant random matrices as in the following corollary.

\begin{corollary}
\label{corollary:Kronecker-RM}
Let the assumptions of Corollary \ref{corollary:Kronecker-free} be fulfilled.
For $i\in\{1,2\}$, we denote by $\naiveMatrix^{(i)}_n=\naiveMatrix(\varepsilon_n \rho_n^{(i)})$ the (rescaled) \naive
random matrix corresponding to the representation $\rho_n^{(i)}$.
Random matrices $\naiveMatrix^{(1)}_n$ and  $\naiveMatrix^{(2)}_n$ are chosen to be independent.

Then the (rescaled) \naive spectral measures $\left(\spectralNaiveShifted_{\varepsilon_n \left( \rho^{(1)}_n \otimes \rho^{(2)}_n \right)} \right)_{n=1,2,\dots}$ of  Kronecker tensor products
and the spectral measures of random matrices
$\left(  \naiveMatrix^{(1)}_n+\naiveMatrix^{(2)}_n \right)_{n=1,2,\dots}$ have asymptotically the same Gaussian fluctuations with
covariance decay $\frac{1}{n^2}$.
\end{corollary}

In the light of Corollary \ref{corollary:Kronecker-RM}, the contents of
Corollary \ref{corollary:Kronecker-free} should not come as a surprise, since it
is well known \cite{Voiculescu1991} that Voiculescu's free convolution
describes asymptotics of the spectrum of sum of two independent random
matrices.

\subsubsection{Restriction to a subgroup}
Similarly, we can handle the problem of restriction to a unitary subgroup.
In the following we consider the sequence of embeddings of the unitary groups $U(1)\subset U(2) \subset \cdots$
given by the natural map $U(n)\ni U \mapsto \begin{bmatrix} U & 0 \\ 0 & 1 \end{bmatrix} \in U(n+1)$.

\begin{corollary}
\label{coro:restriction}
Let $(\varepsilon_n)$ be a sequence of real numbers such that
$\varepsilon_n=o\left(\frac{1}{n}\right)$, for each $n\geq 1$ let $\rho_n$ be
an irreducible representation of $U(n)$ such that the sequence of (rescaled)
\naive spectral measures $\spectralNaiveShifted_{\varepsilon_n \rho_n}$ converges in
moments to some probability measure $\mu$. Let $(m_n)$ be a sequence of integers such that
$1\leq m_n\leq n$ and such that the limit $\alpha:=\lim_{n\to\infty}
\frac{m_n}{n}>0$ exists and is positive. For each $n$ we define $\rho'_n:=
\rho_n
\big\downarrow^{U(n)}_{U(m_n)}$ to be a representation of $U(m_n)$ given by the
restriction of $\rho_n$ to the subgroup.

Then, the sequence of (rescaled) \naive spectral measures
$\spectralNaiveShifted_{\varepsilon_n \rho'_n}$ converges almost surely in
moments to the \emph{free compression} of $\mu$ by a \emph{free projector of trace $\alpha$}
 (see \cite{VoiculescuDykemaNica} for a definition).

In addition, the (rescaled) \naive spectral measure $\spectralNaiveShifted_{\varepsilon_n\rho'_n}$ of the restricted representation
and the spectral measure of the $m_n\times
m_n$ upper-left corner of the random matrix $\naiveMatrix_n$ have asymptotically 
the same Gaussian fluctuations with covariance decay $\frac{1}{n^2}$.
\end{corollary}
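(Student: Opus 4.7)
The plan is to apply Theorem~\ref{theo:main_theorem} twice, using the crucial observation that restriction to the subgroup $U(m_n) \subset U(n)$ corresponds, at the level of the canonical non-commutative random matrix, to taking the upper-left $m_n \times m_n$ corner.

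First, from $\spectralNaiveShifted_{\varepsilon_n \rho_n} \to \mu$ in moments, the random matrix $X_n := X(\varepsilon_n \rho_n) = \varepsilon_n X(\rho_n)$ is Haar-unitarily invariant with eigenvalue distribution converging to $\mu$, and hence converges in higher order free probability to a limit $a$ of distribution $\mu$; moreover $X_n$ is asymptotically free, to all orders, from the diagonal projection $p_n$ onto the first $m_n$ coordinates, which converges to a free projection $p$ of trace $\alpha$. Theorem~\ref{theo:main_theorem} applied to $\rho_n$ then yields that the $n \times n$ non-commutative random matrix associated to $\varepsilon_n \rho_n$ converges in higher order free probability to the same limit $a$.

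The key step is to identify the $m_n \times m_n$ non-commutative random matrix of $\varepsilon_n \rho'_n$ with the upper-left corner of the non-commutative matrix of $\varepsilon_n \rho_n$. This is natural from the construction: the $(i,j)$-entry encodes the action of the infinitesimal generator $E_{ij} \in \uu(n)$ via the derived representation, and restriction to $U(m_n)$ is precisely the operation of retaining only those entries with $i,j \leq m_n$. Combined with the previous step, this gives convergence of the non-commutative matrix of $\varepsilon_n \rho'_n$ to $p a p$, the free compression of $a$ by $p$.

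Finally, Theorem~\ref{theo:main_theorem} applied in the reverse direction to $\rho'_n$ transfers this convergence to $X(\varepsilon_n \rho'_n) = \varepsilon_n X(\rho'_n)$, whose eigenvalue distribution is $\spectralNaiveShifted_{\varepsilon_n \rho'_n}$; this yields the almost sure convergence in moments to the free compression of $\mu$ by a free projector of trace $\alpha$. The identity of Gaussian fluctuations of $\spectralNaiveShifted_{\varepsilon_n \rho'_n}$ and of the spectrum of $X'_n$ is automatic, since higher order free probability encodes precisely the covariance structure of these fluctuations and both sequences have been shown to share the higher order free limit $p a p$. The principal obstacle is the identification of the restricted non-commutative matrix with the corner: this requires careful bookkeeping tied to the specific construction of the canonical matrix developed later in the paper, together with verifying that the corner operation is compatible with the equivalence in Theorem~\ref{theo:main_theorem} uniformly across all orders of fluctuation.
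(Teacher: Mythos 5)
Your overall plan matches what the paper intends — it explicitly skips this proof, remarking that it closely parallels the Kronecker case, and you have correctly identified the structural observation replacing tensor-independence there: that restriction to $U(m_n)$ corresponds to extracting the upper-left $m_n\times m_n$ corner of the non-commutative matrix $\bigl(\rho_n(e_{ij})\bigr)_{i,j}$. Since $\mathfrak{u}(m_n)$ is spanned by exactly the generators $e_{ij}$ with $i,j\leq m_n$, this identification is in fact immediate from \eqref{eq:reps-as-a-matrix}; it is not the ``principal obstacle'' you flag, and no delicate bookkeeping is needed here.

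The real hinge of the argument is the step you leave vague: that taking corners behaves well for \emph{all} orders of the higher order free probability data. You invoke ``asymptotic freeness to all orders'' of $X_n$ from $p_n$ for the classical matrix, and then apply the same conclusion to the non-commutative matrix of $\varepsilon_n\rho_n$ without justification. The cleaner and more elementary route — and the one that actually parallels the Kronecker proof's use of the microscopic cumulants $\kappa_{(\V,\pi)}$ — is to observe that for any $U(n)$-invariant matrix $X$ (classical or with non-commutative entries) and any $(\V,\pi)$ on $\{1,\dots,k\}$ with $k\le m_n$, Proposition~\ref{prop:7} applied to the corner $Y$ of $X$ and the formula $\kappa^{Y}_{(\V,\pi)}=k_\V\bigl(Y_{1\,\pi(1)},\dots,Y_{k\,\pi(k)}\bigr)=k_\V\bigl(X_{1\,\pi(1)},\dots,X_{k\,\pi(k)}\bigr)=\kappa^{X}_{(\V,\pi)}$ shows that the corner has literally the same microscopic cumulants as the original matrix. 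Consequently the higher order free cumulants of the corner, normalized by $m_n$, are $K^Y_{(\V,\pi)}=\alpha^{|(\V,\pi)|}K^X_{(\V,\pi)}$. Since Theorem~\ref{theo:main_theorem-RELOADED} already gives that the $\kappa_{(\V,\pi)}$ of the non-commutative matrix of $\varepsilon_n\rho_n$ agree asymptotically with those of $X_n$, the corners of both inherit the same higher order limit, and one then applies the theorem once more (checking that $\varepsilon_n=o(1/n)=o(1/m_n)$ since $\alpha>0$) to transfer this to the macroscopic level and conclude both the almost-sure convergence (via Borel--Cantelli, as in the proof of Corollary~\ref{corollary:Kronecker-free}) and the agreement of Gaussian fluctuations. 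This avoids invoking an asymptotic-freeness result at all orders for a random matrix with non-commuting entries, which is not established in the paper and which your proposal leans on without comment.
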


\begin{problem}
What happens in the above Corollary \ref{coro:restriction} in the case when $\lim m_n=\infty$ but $\lim \frac{m_n}{n}=0$? We conjecture that the limiting distribution in this case is the semicircular law and instead of the corners of the matrices one can take some multiple of the (traceless?) GUE random matrix.
\end{problem}

\begin{remark}
Corollaries \ref{corollary:Kronecker-free}, \ref{corollary:Kronecker-RM} and \ref{coro:restriction} remain true if 
the \naive spectral measures $\spectralNaiveShifted_{\varepsilon \rho}$ of representations are replaced by the natural 
spectral measures $\spectralNatural_{\varepsilon \rho}$ which will be introduced in Section \ref{subsec:spectral-measure}.
\end{remark}

\begin{problem}
Do Corollary \ref{corollary:Kronecker-free}, Corollary \ref{corollary:Kronecker-RM} and 
Corollary \ref{coro:restriction} still hold true if we replace the convergence in moments by weak
convergence of probability measures? 
\end{problem}

\subsection{Elements of proof}
\label{subsec:elements-of-proof-II}

\subsubsection{Representations as non-commutative vectors}
In our previous paper \cite{Collins'Sniady2006}, we studied the asymptotics of a
sequence $(\rho_n)$ of representations of a fixed compact Lie
group $G$. The first main idea was that instead of the representation 
$\rho:G\rightarrow\End(V)$ of a Lie group, it is more convenient to consider its
derivative $\rho:\g\rightarrow\End(V)$ which is a representation of the corresponding Lie algebra $\g$.

The second main idea was that each representation
$\rho:\g\rightarrow\End(V)$ of the Lie algebra $\g$ can be
equivalently viewed as $\rho\in\g\gwia \otimes \End(V)$, where $\g\gwia$ denotes the
vector space dual to $\g$. Since $\End(V)$
equipped with the \emph{normalized trace} $\trV$ can be viewed as a
\emph{non-com\-mu\-ta\-tive probability space}, $\rho\in\g\gwia \otimes \End(V)$ becomes a
\emph{non-com\-mu\-ta\-tive random vector} in $\g\gwia$.
Our problem is therefore reduced to studying the sequence $(\varepsilon_n 
\rho_n)$
of non-commutative random vectors in $\g\gwia$, where $(\varepsilon_n)$ is some
suitably chosen sequence of numbers which takes care of the right normalization.
We proved that in many situations the distribution of $\varepsilon_n \rho_n$
converges to a classical (commutative) probability distribution on $\g\gwia$
which, when the group $G$ has some matrix structure, can be interpreted as some
random matrix. 

In this way, several problems of the asymptotic representation
theory of Lie group $G$ have answers in terms of certain random matrices and
their eigenvalues.

\subsubsection{The difficulty: fixed group replaced by a sequence of groups}
In the current paper, the fixed group $G$ is replaced by a sequence of 
groups $G_1,G_2,\dots$ (in fact, we concentrate on a very special case when 
$G_n=U(n)$ is the unitary group) and we study the asymptotic properties of the sequence
$(\rho_n)$, where $\rho_n$ is a representation of $G_n$. 
Our previous paper \cite{Collins'Sniady2006} is not directly
applicable because each $\rho_n$ is a non-commutative random vector in a
different space, namely $\g_n\gwia$ (where $\g_n$ is the Lie algebra of $G_n$), 
and it is not possible to consider the
limit of the distributions.
In the following, we show how to overcome this
difficulty and how to find a substitute for the notion of convergence in
distribution which will allow us to speak about asymptotic distribution of a
sequence of representations.

In Lemma \ref{lem:invariance-of-moments}, we prove that the
$\rank$-th \emph{moment of the representation} $\rho$ of Lie algebra $\g$ of a Lie group $G$
\[ \momentTensor_\rank(\rho)=\E \left(\rho^{\widehat{\otimes} \rank}\right) \in
(\g\gwia)^{\otimes \rank} \]
 is
invariant under the coadjoint action of $G$
(for the exact definition of the \emph{moment of the representation}, see Section \ref{subsect:random_vectors}). 
The set of such $G$-invariant
elements of $(\g\gwia)^{\otimes \rank}$ is denoted by $\left[(\g\gwia)^{\otimes
\rank}\right]_G$. For many groups $G$, the corresponding invariant spaces
$\left[(\g\gwia)^{\otimes \rank}\right]_G$ are surprisingly nice.

The common structure of the groups $(G_n)$ which turns out to be sufficient for
our purposes is the following one: we assume that for each $\rank$, the spaces
$\left[(\g_n\gwia)^{\otimes \rank}\right]_{G_n}$ 
are all isomorphic in some  
canonical way,
except possibly for finitely many values of $n$,
 to (a subspace
of) some abstract vector space denoted by $\left[(\g\gwia)^{\otimes \rank}\right]_{G}$; in
this way, we can regard the inclusions as follows:
\[ \momentTensor_\rank(\rho_n) \in \left[(\g_n\gwia)^{\otimes \rank}\right]_{G_n}
\subseteq
\left[(\g\gwia)^{\otimes \rank}\right]_G. \]
For each value of $\rank$ we choose some
basis in the invariant space $\left[(\g\gwia)^{\otimes
\rank}\right]_{G}$. Now, it makes sense to speak about the asymptotic behavior of the
coordinates of $\momentTensor_\rank(\varepsilon_n \rho_n)$ in this basis, for some
suitably chosen
sequence $(\varepsilon_n)$ and we are able to compare the distributions of
representations of different groups.

\subsubsection{The invariant spaces for the unitary groups}
\label{subsec:intro-invariant-spaces}
In the concrete example
of the series of the unitary groups $G_n=U(n)$, the corresponding
invariants are given by the vector spaces given by the symmetric groups algebras
$\C[\Sy{\rank}]$, as shown in Section \ref{subsec:invariant-spaces}.
Consider a representation $\rho_n:\uu(n)\rightarrow \End(V_n)$ 
of the Lie algebra $\uu(n)$ of the unitary group $U(n)$.
The corresponding moment 
\[ \momentTensor_\rank(\varepsilon_n \rho_n) \in \left[ \big( \uu(n)\gwia
\big)^{\otimes \rank}
\right]_{U(n)} \subseteq \C[\Sy{\rank}]\]
can be identified with a function on the symmetric group $\Sy{r}$ which is given
explicitly (for $n\geq r$) as 
\begin{equation} 
\label{eq:moments_moments}
\big( \momentTensor_\rank(\varepsilon_n \rho_n) \big) (\pi) = \varepsilon_n^\rank
\trVn\left[ \rho_n (
e_{1 \pi_1} ) \cdots \rho_n ( e_{\rank \pi_\rank} ) \right],
\end{equation}
where $e_{ij}\in \M_n(\C)= \uu(n) \otimes_\R \C$ are the matrix units
(we will show a refined version of this in Proposition \ref{prop:7}).
The above quantities \eqref{eq:moments_moments} contain complete information
about representation $\rho_n$; the study of asymptotics of representations is
therefore reduced to studying asymptotics of $\momentTensor_\rank(\varepsilon_n
\rho_n)\in\C[\Sy{\rank}]$ in the
limit $n\to\infty$. It remains to determine which asymptotics will be most
convenient.

\subsubsection{Higher order free probability}
The same problem appears in the random matrix theory, where analogous quantities
$ \momentTensor_\rank(\genericMatrix_n)$ can be considered for a unitarily invariant random
matrix $\genericMatrix_n$. This problem
has been studied in the context of the theory of \emph{higher order free probability}
which was introduced by Mingo and Speicher and later on was further developed
also by the authors of this article
\cite{HigherOrderFreeness1,HigherOrderFreeness2,HigherOrderFreeness3}. The main
goal of this theory is to give an abstract framework which would be able to
describe asymptotics of fluctuations of random matrices in a similar way as
Voiculescu's original free probability \cite{VoiculescuDykemaNica} describes the
mean behavior of random matrices. This goal was achieved by the notions of
\emph{higher order moments} and \emph{higher order free cumulants} which on one side have very
nice probabilistic interpretations for a given sequence of random matrices and
on the other side are abstract quantities which concern abstract objects
modeling limits of random matrices. 

The current paper gives applications of the combinatorial machinery of higher order free probability \cite{HigherOrderFreeness3} to
representation theory, and therefore stands as a first application of higher order freeness beyond random matrix theory.

\subsection{Organization of the paper}

In Section \ref{sec:noncommutative}, we recall the notations related to
non-commutative random variables and non-commutative random vectors.
In Section \ref{sec:unitarily}, we study unitarily invariant random matrices
with non-commutative entries.
In Section \ref{sec:non-commuting-entries}, we study representations as random
matrices with non-commutative entries and prove our main result (Theorem \ref{theo:main_theorem} 
will be proved in a more precise formulation as Theorem \ref{theo:main_theorem-RELOADED}).
In Section \ref{sec:applications}, we present applications of the main result
and proofs of the results presented in Section \ref{subsec:concrete-applications}.

\section{Non-commutative probability}
\label{sec:noncommutative}

\subsection{Traces}
We denote by $\Tr$ the usual trace on the matrix algebra $\M_n(\C)$ and by
$\trn:=\frac{1}{n} \Tr$ the \emph{normalized trace}. For an endomorphism $x\in\End(V)$, 
we denote by $\trV x = \frac{1}{\text{dim $V$}} \Tr x$ the corresponding normalized trace.

With these notations, the traces of the unit 
matrix $1\in\M_n(\C)$ are given by
\begin{align*}
  \Tr 1 & = n, \\
  \trn 1 & = 1.
\end{align*}

\subsection{Non-commutative probability spaces}
\label{subsec:noncommutative-probability}
Let us recall briefly some basic notions of \emph{non-commutative probability theory} 
\cite{VoiculescuDykemaNica,MeyerQuantumprobability}.

Let $( \Omega, \mathfrak{M}, P )$ be a Kol\-mo\-go\-rov probability space. We
consider the algebra
\[ \El^{\infty -} ( \Omega ) := \bigcap_{n\geq 1} \El^n(\Omega)\]
of random variables with all moments finite. This algebra is equipped with a
functional $\E : \El^{\infty -} ( \Omega ) \rightarrow \C$ which to a random
variable associates its mean value.

We consider a generalization of the above setup in which the commutative algebra
$\El^{\infty -} ( \Omega )$ is replaced by any (possibly
non-com\-mu\-ta\-ti\-ve) \mbox{$\star$-algebra} $\A$ with a unit and $\E : \A
\rightarrow \C$ is any linear functional which is normalized (i.e., $\E ( 1 ) =
1$) and positive (i.e., $\E(x\gwia x)>0$ for all $x\in\A$ such that $x\neq 0$).
The elements of $\A$ are
called \emph{non-commutative random variables} and the functional $\E$ is
called the \emph{mean value} or \emph{expectation}. We also say that $( \A, \E )$ is a 
\emph{non-commutative probability space}.

The \emph{joint distribution} of a family $(x_i)_{i\in I}$ of non-commutative random variables is defined as 
the collection of their moments $\big(\E (x_{i_1} \cdots x_{i_l}) \big)_{i_1,\dots,i_l\in I}$. Classical random variables 
can also be viewed as non-commutative random variables; notice that the concept of the (joint) 
distribution of random variables is different in both setups but in the case of probability measures which are uniquely 
determined by their moments 
both notions determine each other.

\subsection{Partitions and partitioned permutations}
We recall briefly basic combinatorial tools of higher order free probability theory, in particular 
\emph{partitioned permutations} \cite[Section 4]{HigherOrderFreeness3}.

The set of \emph{partitions} of the set $[\rank]:=\{1,\dots,\rank\}$ is endowed with the partial order
defined as follows:  $\V\leq\W$ if every block of partition $\V$ is
contained in some block of partition $\W$. 

For a permutation $\pi\in\Sy{r}$ we denote by $C(\pi)$ the partition of $[r]$ corresponding to
the cycles of $\pi$. We write $\pi\leq\W$ if every cycle of the permutation
$\pi$ is contained in some block of the partition $\W$ or, in other words, if
$C(\pi)\leq \W$.

We denote by $\#\V$ the number of blocks of a partition $\V$. We also denote by
$\#\pi=\#C(\pi)$ the number of cycles of $\pi$.

The set of partitions carries a lattice structure $\vee,\wedge$, where the
smallest element is the discrete partition
$\mathbf{0}=\mathbf{0}_\rank:=\big\{\{1\},\ldots ,\{\rank\}\big\}$ and the largest element
is the rough partition
$\mathbf{1}=\mathbf{1}_\rank:=\big\{\{1,\ldots ,\rank\}\big\}$.

A \emph{partitioned permutation} of $[\rank]$ is a pair $(\V,\pi)$,
where $\V$ is a partition of $[\rank]$ and $\pi$ is a permutation of 
the same set $[\rank]$ such that $\pi\leq \V$. For a given permutation $\pi$ we denote  by
$(\mathbf{0},\pi):=(C(\pi),\pi)$ the partitioned permutation with the smallest
possible partition for $\pi$.

We define the \emph{length of the permutation} $\pi\in \Sy{\rank}$ as $|\pi|:=\rank-\#\pi$.
We also define the \emph{length of the partitioned permutation}
$(\V,\pi)$ of the set $[\rank]$ as 
\[ |(\V,\pi)|:= |\pi|+2 (\#\pi - \#\V) \]
and the \emph{length of a partition} $|\V|$ of the same set as $|\V|:=\rank-\#\V$.

We say that $(\V_1,\pi_1) \cdot (\V_2,\pi_2)= (\V_3,\pi_3)$ if
$\V_1\vee\V_2=\V_3$ and $\pi_1 \pi_2=\pi_3$ and $|(\V_1,\pi_1)| +
|(\V_2,\pi_2)|=|(\V_3,\pi_3)|$. Notice that with this definition the product of
two partitioned permutation is not always defined. 

We say that $(\V_1,\pi_1) \leq (\V_2,\pi_2)$ if $(\V_1,\pi_1) \cdot
(\mathbf{0},\pi_1^{-1} \pi_2) = (\V_2,\pi_2)$. This relation is, in
general, \emph{not transitive}. However, since $(\V_1,\pi_1) \leq (\V_2,\pi_2)$ implies $|(\V_1,\pi_1)| \leq |(\V_2,\pi_2)|$ and 
the latter inequality becomes an equality only if $(\V_1,\pi_1)=(\V_2,\pi_2)$, it follows that the relation $\leq$ in the set of 
partitioned permutations is \emph{acyclic} and thus can be extended to a linear order.

The symmetric group $\Sy{\rank}$ acts on the set of partitions of $[\rank]$ as follows: for $\pi\in\Sy{\rank}$ and partition 
$\V$ of $[\rank]$ we define $\pi(\V)$ as the unique partition which connects the elements $\pi(a)$ and $\pi(b)$ 
if and only if $a$ and $b$ are connected by partition $\V$, for arbitrary $a,b\in[\rank]$.

We say that partitioned permutations $(\V_1,\pi_1)$ and $(\V_2,\pi_2)$ are
\emph{conjugate} by a permutation $\sigma$ if they are equal after relabeling the
elements of $[\rank]$ given by $\sigma$. Formally speaking, this means
that $\pi_2= \sigma \pi_1 \sigma^{-1}$ and $\sigma(\V_1)=\V_2$.

\subsection{Tensor independence and non-commutative cumulants}
\label{subsec:tincc}

Let $(\A_i)$ be a (finite or infinite) sequence of subalgebras of some
non-commutative probability space $\A$. They are said to be \emph{tensor independent}
if they commute and $\E(a_1 a_2 \cdots )=\E(a_1) \E(a_2) \cdots$
holds for all sequences $(a_i)$ which contain only finitely many elements
different from $1$ and such that $a_i\in\A_i$. Tensor independence can be
regarded as a substitute of the usual independence of classical random variables
in the non-commutative setup.

Let $\widetilde{\A}=\bigotimes_{n\in\mathbb{N}} \A$ be the inductive limit of
algebraic tensor products. This is a non-commutative probability space together
with the infinite tensor product state $\E^{\otimes \infty}$. Clearly, the 
subalgebras 
\[\A^{(i)}:=1^{\otimes i-1}\otimes \A\otimes 1 \otimes \cdots \subset \widetilde{\A} \]
are tensor independent. We will regard $(\A^{(i)})_i$ as a family of
tensor independent copies of the algebra $\A$. 
Given $a\in\A$, we define its $i$-th tensor independent copy
$a^{(i)}\in  \A^{(i)}$ by 
\[a^{(i)}:=1^{\otimes i-1}\otimes a\otimes 1\otimes\cdots. \]

With this material we can introduce the notion of a \emph{non-commutative
cumulant}.
For each $i\in[\rank]$ let $a_i\in\A$ be a non-commutative
random variable. For any partition $\V$ of $[\rank]$ we can define a multilinear moment map
\[\E_{\V}: \underbrace{\A\times \cdots \times\A}_{\text{$\rank$ times}}\to \C\] by
\[\E_{\V}(a_1,\dots,a_\rank)=
\E^{\otimes \infty} \left(a_1^{(b(1))} \cdots  a_\rank^{(b(\rank))}\right), \]
where $b: [\rank]\to \mathbb{N}$ is any function defining the partition
$\V$, i.e., $b(i)=b(j)$ if and only if $i$ and $j$ belong to the same block of
$\V$. Following the classical scheme \cite{Lehner2002cumulantsI}, 
we define \emph{tensor cumulants} to be the unique multilinear maps 
\[k_{\V}: \underbrace{\A\times \cdots \times\A}_{\text{$\rank$ times}} \to \C\]
(where $\V$ is a partition of $[\rank]$) such that
\begin{equation}
\label{eq:moment-cumulant-formula}
\sum_{\W\leq \V}k_{\W}=\E_{\V} 
\end{equation}
for every partition $\V$.

A special role is played by the cumulant corresponding to the maximal
partition; we will use a special notation for it:
\[ k_\rank(a_1,\dots,a_\rank):=k_{\mathbf{1}_\rank}(a_1,\dots,a_\rank).  \]

Observe that this definition is actually Lehner's cumulant in case of the tensor
independence case, cf.~\cite{Lehner2002cumulantsI}. 
When $\A=\mathcal{L}^{\infty-}(\Omega)$, this corresponds to the classical
probability space, and tensor cumulants coincide with the classical cumulants of
random variables.

Notice that the family $(\E_{\V})$ is \emph{multiplicative} in the sense that
$\E_{\V}(a_1,\dots,a_\rank)$ is a product of the expressions 
$\E(a_{i_1}\cdots a_{i_m})$ over the blocks
$\{i_1<\cdots<i_m\}$ of
the partition $\V$. It follows immediately that the family $(k_{\V})$ is
multiplicative as well.
For more on this
topic of multiplicative functions on
partitions and their applications to free probability theory we refer to
\cite{NicaSpeicher_book}.

\subsection{Cumulants and commutators}

In the following we  use the following notational shorthands:
\begin{align*} k_n(\dots,a_i,a_{i+1},\dots)
& :=k_n(a_1,\dots,a_{i-1},a_i,a_{i+1},a_{i+2},\dots,a_n),\\
k_n(\dots,a_{i+1},a_{i},\dots) & :=k_n(a_1,\dots,a_{i-1},a_{i+1},a_i,a_{i+2},\dots,
a_n),\\
k_{n-1}(\dots,[a_{i},a_{i+1}],\dots) &
:=k_{n-1}(a_1,\dots,a_{i-1},[a_{i},a_{i+1}],a_{i+2},\dots,
a_n),
\end{align*}
where $[x,y]=xy-yx$ denotes the commutator, and similar ones.

\begin{lemma}
\label{lem:cumulants-and-commutators}
For any elements $a_1,\dots,a_\rank\in\A$, any $1\leq i\leq \rank-1$ and any partition
$\W$ of $[\rank]$,
\begin{multline}
\label{eq:commutator}
k_{\W}(\dots,a_i,a_{i+1},\dots)-k_{\pi(\W)}(\dots,a_{i+1},a_{i},
\dots) = \\
\begin{cases}
0 & \text{if $i$ and $i+1$ are not connected by $\W$},\\
k_{\W'} (\dots,[a_i,a_{i+1}],\dots) & \text{otherwise},
\end{cases}
\end{multline}
where $\pi=(i,i+1)\in\Sy{\rank}$ denotes the transposition interchanging $i$ and $i+1$,
and where $\W'$ denotes the partition of\/ $[\rank-1]$ resulting from $\W$ by
merging $i$ and $i+1$ into one element $i$ and by relabeling the elements
$i+2,\dots,\rank$ into the elements $i+1,\dots,\rank-1$.
\end{lemma}
\begin{proof}
We split the proof into two parts.
\begin{enumerate}[label=\emph{\alph*})]
 \item 
\label{part:not-connected}
Let us consider the case when  $i$ and $i+1$ are not connected by $\W$. 
We use M\"obius inversion in a rather weak form, i.e.,~the fact that  the cumulant $k_\W$ is a linear combination of moments $\E_\W$ over $\W\leq \V$; thus over $\W$ which do not connect $i$ with $i+1$. We compare such expressions for each of the two 
terms on the left-hand side of \eqref{eq:commutator}; they clearly coincide.

\item
Let us consider now the case when  $i$ and $i+1$ are connected by $\W$. 
Roughly speaking, the proof is an application of the non-commutative version of the formula of Leonov and Sirjaev \cite{LeonovSiraev} for cumulants of products to the right
hand side of the above equality. We provide the details of the proof below.

Let $\V$ be any partition of $[\rank]$ such that $i$ and $i+1$ are
connected by $\V$. From the defining relations for cumulants it follows that
\begin{multline*}\E_{\V'}(\dots,[a_i,a_{i+1}],\dots)=\\ \E_\V(\dots,a_i,a_{i+1},
\dots)-\E_\V(\dots , a_{i+1},a_{i},\dots)= \\ 
\sum_{\W\leq\V} k_\W(\dots,a_i,a_{i+1},\dots)-
\sum_{\W\leq\V} k_\W(\dots,a_{i+1},a_{i},\dots)= \\
\sum_{\W\leq\V} k_\W(\dots,a_i,a_{i+1},\dots)-
k_{\pi(\W)}(\dots,a_{i+1},a_{i},\dots),
\end{multline*}
where in the last equation we used the fact that $\W\mapsto \pi(\W)$ is a permutation of the set of partitions which are smaller than $\V$.

From the case \ref{part:not-connected} considered above it follows that if $\W$ does not connect $i$ and $i+1$ then the corresponding summand on the right hand side is equal to
zero. It follows that the sum on the right hand side can be written as
\[ \sum_{\W'\leq \V'}
k_\W(\dots,a_i,a_{i+1},\dots)-k_\W(\dots,a_{i+1},a_{i},\dots), \]
where $\W$ is the partition of $[\rank]$ with a property that $i$ and $i+1$ are connected
by $\W$, obtained from $\W'$ by splitting the element $i$ into $i$ and $i+1$ and by relabeling the elements
$i+1,\dots,\rank-1$ into the elements $i+2,\dots,\rank$.

It follows that the function on the set of partitions of $[\rank-1]$ defined by 
\[ k_{\W'}:=k_\W(\dots,a_i,a_{i+1},\dots)-k_\W(\dots,a_{i+1},a_
{i},\dots) \]
fulfills the defining property \eqref{eq:moment-cumulant-formula} of cumulants
$k_{\W'} (\dots,[a_i,a_{i+1}],\dots)$. Since such a function is unique, this finishes the proof.
\end{enumerate}
\end{proof}

\subsection{Non-commutative random vectors}
\label{subsect:random_vectors}

Let $( \A, \E )$ be a non-com\-mu\-ta\-ti\-ve probability space and $W$ be a
vector space; the elements of $W \otimes \A$ will be called
\emph{non-com\-mu\-ta\-ti\-ve random vectors in $W$ (over the non-commutative
probability space $( \A, \E )$)}.

Given elementary tensors $w_1=x_1\otimes a_1\in W_1\otimes \A$ and $w_2=x_2\otimes a_2 \in
W_2\otimes \A$, we define 
\[w_1\widehat{\otimes} w_2=(x_1\otimes a_1)\widehat{\otimes} (x_2\otimes a_2):=
(x_1\otimes x_2\otimes a_1a_2)\in W_1\otimes W_2\otimes \A
\]
and its linear extension to non-elementary tensors.
Whenever $w_1=w_2$ with $W_1=W_2$ one shortens the notation 
as $w^{\widehat{\otimes} 2}\in W^{\otimes 2}\otimes \A$ and one extends it 
by induction 
to the definition of
\[w^{\widehat{\otimes} \rank}\in W^{\otimes \rank}\otimes \A .\]
Observe that this definition is reminiscent of the definition of tensor product
of representations 
of compact quantum groups of Woronowicz
\cite{Woronowicz87Compact-matrix-pseudogroups}
provided that $\A$ is a quantum group and $W$ a representation
of $\A$.

For a non-commutative random vector $w$ we define its \emph{$\rank$-th order
vector moment} $\momentTensor_\rank(w)$ to be
\[\momentTensor_\rank(w):=(\Id\otimes  \E )w^{\widehat{\otimes} \rank}\in 
W^{\otimes \rank}.\]
We define the \emph{distribution of a non-commutative random vector} as the sequence 
$(\momentTensor_\rank(w))_{\rank=1,2,\dots}$ of its moments. 
These moments can be used in the obvious way to define 
 \emph{convergence in moments of non-commutative random vectors}.

The above definitions can be made more explicit as follows: let
$e_1,\dots,e_d$
be a base of the finite-dimensional vector space $W$. Then a (classical)
random vector $w$ in $W$ can be viewed as
\begin{equation}
\label{eq:wspolrzedne}
 w= \sum_i a_i e_i,
\end{equation}
where $a_i$ are the (random) coordinates. Then a non-commutative random vector
can be viewed as the sum \eqref{eq:wspolrzedne}, in which $a_i$ are replaced by
non-commutative random variables. One can easily see that the moment
\[ \momentTensor_\rank(w) = \sum_{i_1,\dots,i_\rank} \E(a_{i_1} \cdots a_{i_\rank}) \   
e_{i_1} \otimes \cdots \otimes e_{i_\rank}\]
contains nothing else but the information about the mixed moments of the
non-commutative coordinates $a_1,\dots,a_d$ and the convergence of moments of $w$ is
equivalent to the convergence of the mixed moments of $a_1,\dots,a_d$.

In the sequel of the paper, we pay special attention to the case when the
vector space $W=\M_n(\C)$ is the matrix algebra. In this case the
non-commutative random vectors, elements of $\M_n(\C) \otimes \A=\M_n(\A)$ can
be also called \emph{random matrices with non-commutative entries}.

\subsection{Non-commutative probability space corresponding to the Lie algebra representation}
\label{subsec:lie-representation-random-matrix-A}

In this article we  concentrate on the following example of a non-commutative random vector related to a representation of some Lie algebra.

Let $\g$ be a finite-dimensional Lie algebra. Its representation
$\rho:\g\rightarrow\End(V)$
can be alternatively viewed as $\rho\in \g\gwia \otimes \End(V)$, i.e., as a
non-commutative random vector in $\g\gwia$ (the vector space dual to the vector space $\g$) over the non-commutative
probability space $\big(\End(V),\trV \big)$. In this case 
the moment $ \momentTensor_\rank(\rho) \in (\g\gwia)^{\otimes \rank}$ can be alternatively viewed as
$ \momentTensor_\rank(\rho): \g^{\otimes \rank} \rightarrow\C$ which is given explicitly on elementary tensors by
\[ \momentTensor_\rank(\rho) (x_1 \otimes \cdots \otimes x_\rank) = \trV \big[  \rho(x_1) \cdots \rho(x_\rank) \big]
\qquad \text{for } x_1,\dots,x_\rank\in\g. \]

We consider the coadjoint action of $G$ on $\g\gwia$ by the complex conjugate matrix\footnote{
In the case when $G=U(n)$, the definition of the complex conjugate $\bar{g}$ creates no difficulties. However, for an abstract group $G$, this complex conjugate might be not well defined. In this case one should rather consider the usual coadjoint action
$g\cdot x:=(\Ad_{{g}^{-1}})\gwia(x)$. Note that the sets of $G$-invariant tensors for both actions are identical.}, i.e.~the action 
given explicitly by 
\[g\cdot x:=(\Ad_{\bar{g}^{-1}})\gwia(x)=(\Ad_{g^T})\gwia(x)\] 
for $g\in G$ and
$x\in\g\gwia$. This action extends to an action of $G$ on $(\g\gwia)^{\otimes
\rank}$.

\begin{lemma}
\label{lem:invariance-of-moments}
If $\rho:\g\rightarrow\End(V)$ is a representation viewed as a
non-com\-mu\-ta\-tive
random vector and $\rank\geq 1$ is an integer then 
\[ \momentTensor_\rank(\rho) \in \left[ (\g\gwia)^{\otimes \rank} \right]_G,\]
i.e., it is invariant under the coadjoint action of $G$. 
\end{lemma}
\begin{proof}
For any $x_1,\dots,x_\rank\in\g$ and $g\in G$
\begin{align*}  \big( g \cdot \momentTensor_\rank(\rho) \big) (x_1\otimes \cdots
\otimes x_\rank) 
= & \momentTensor_\rank(\rho)  \big( \Ad_{\bar{g}^{-1}}(x_1)\otimes \cdots
\otimes \Ad_{\bar{g}^{-1}}(x_\rank) \big) \\
= &
\trV \left[ \rho\big(\Ad_{\bar{g}^{-1}}(x_1) \big)  \cdots  \rho\big(\Ad_{\bar{g}^{-1}}(x_\rank)
\big) \right] \\
= &
\trV \left[ \rho(\bar{g}^{-1}) \rho(x_1)  \cdots \rho(x_\rank)
 \rho(\bar{g}) \right] \\ 
= & \trV \left[ \rho(x_1)  \cdots \rho(x_\rank)
\big)  \right] \\
= & \momentTensor_\rank(\rho) (x_1\otimes \cdots
\otimes x_\rank). 
\end{align*}
By linearity, the above equation extends to general tensors.
Thus we have shown that $ g \cdot \momentTensor_\rank(\rho)= \momentTensor_\rank(\rho)$ as required.
\end{proof}

\section{Unitarily invariant matrices with non-commutative entries and higher-order
probability spaces}
\label{sec:unitarily}

For the case when $G=U(n)$ is the unitary group and $\g=\uu(n)$ is its Lie algebra, 
we elaborate on the discussion of Section
\ref{subsec:elements-of-proof-II} and describe the invariant space
$[(\g\gwia)^{\otimes \rank}]_{G}$ to which the moments $\momentTensor_\rank(\rho)$
belong.

\subsection{The matrix structure on $\uu(n)\gwia$}
\label{subsec:matrix-structure-1}

We equip the linear space $\uu(n)$ of antihermitian matrices with a non-degenerate bilinear symmetric form
\begin{equation}
\label{eq:bilinear-form}
\langle x,y \rangle=\Tr x^T y
\end{equation}
which gives an isomorphisms allowing to identify $\uu(n)\gwia$ with $\uu(n)$.
This isomorphism is equivariant with respect to the action of the unitary group $U(n)$; indeed, if $f\in \uu(n)\gwia$ is the functional corresponding to $x\in \uu(n)$, then for any $y\in \uu(n)$ and $g\in U(n)$
\[
(g\cdot f)(y)= f\left( \Ad_{\bar{g}^{-1}}(y) \right)
=\Tr x^T  \bar{g}^{-1} y \bar{g} =
\Tr \left(  {g} x {g}^{-1} \right)^T y =
\left\langle \Ad_{g}(x), y \right\rangle 
\]
which shows that indeed $g\cdot f$ is the functional corresponding to $g\cdot x$.

Note that the Lie
algebra complexification $\uu(n) \otimes_\R \C=
\gl(n) = \M_n(\C)$ has a matrix structure and thus
$\uu(n)\gwia \otimes_\R \C\cong \uu(n) \otimes_\R \C = \M_n(\C)$
can be identified with matrices. This identification has the following concrete form:
$x\in \uu(n)\gwia \otimes_\R \C$ corresponds to the matrix
\[ \begin{bmatrix}
x(e_{11}) & \dots  & x(e_{n1}) \\
\vdots       & \ddots & \vdots       \\
x(e_{1n}) & \dots  & x(e_{nn}) 
\end{bmatrix} = \sum_{i,j} x(e_{ij})\ e_{ij} \in \M_n(\C),\]
where $e_{ij}\in \M_n(\C) = \uu(n) \otimes_\R \C$ are the matrix units.
Indeed, the above matrix defines via \eqref{eq:bilinear-form} a functional which on a matrix unit $e_{kl}$ takes the same value as the functional $x$:
\[ \left\langle  \sum_{i,j} x(e_{ij})\ e_{ij}, e_{kl} \right\rangle =
x(e_{kl}).\]

\subsection{The invariant spaces}
\label{subsec:invariant-spaces}

We will need following classical result, known as Schur-Weyl duality theorem \cite[Section 9.1]{GoodmanWallach}.

\begin{theorem}
\label{theo:generalized-schur-weyl}
Let $\rho$ be the diagonal action of the unitary group $U(n)$ on
$(\C^n)^{\otimes \rank}$. Let $\tilde{\rho}$ be the action of the symmetric group
$\Sy{\rank}$ on $(\C^n)^{\otimes \rank}$ by permutation of elementary
tensors. 

The actions of $\Sy{\rank}$ and of\/ $U(n)$ commute, therefore
$\rho\times\tilde{\rho}$ is a representation of\/ $\Sy{\rank}\times U(n)$
on $(\C^n)^{\otimes \rank}$. This representation is multiplicity free. 
Equivalently, the commutant of $\rho$ in $(\C^n)^{\otimes \rank}$ is $\tilde{\rho}$ and vice versa.
\end{theorem}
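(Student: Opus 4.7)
The plan is to deduce everything from the double centralizer theorem applied to two mutually commuting semisimple subalgebras of $\End\bigl((\C^n)^{\otimes k}\bigr)$. Let $A\subseteq \End\bigl((\C^n)^{\otimes k}\bigr)$ be the unital subalgebra linearly generated by $\rho(U(n))$ and $B$ be the unital subalgebra linearly generated by $\tilde\rho(\Sy{k})$. Both algebras are semisimple: $B$ is a quotient of $\C[\Sy{k}]$, and $A$ is the image of a unitary representation of the compact group $U(n)$, hence completely reducible. The inclusion $B\subseteq A'$ is immediate from the very definition of the two actions, because permuting tensor factors commutes with applying the same operator $g$ to each factor; this takes care of the first assertion.

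The main step is the reverse inclusion $A'\subseteq B$, equivalently $A\supseteq B'$. First I would upgrade $A$: since $U(n)$ is Zariski dense in $GL_n(\C)$, the linear span of $\{g^{\otimes k}:g\in U(n)\}$ coincides with the span of $\{g^{\otimes k}:g\in GL_n(\C)\}$, and hence (by density once more, or by the fact that $M_n(\C)=\spann GL_n(\C)$) with the span of $\{g^{\otimes k}:g\in M_n(\C)\}$. A standard polarization identity then shows that this span equals the symmetric tensors
\[
\Sym^k\bigl(M_n(\C)\bigr) \subseteq M_n(\C)^{\otimes k} = \End\bigl((\C^n)^{\otimes k}\bigr).
\]
Under the canonical identification $M_n(\C)^{\otimes k}\cong \End((\C^n)^{\otimes k})$, an element of $M_n(\C)^{\otimes k}$ commutes with $\tilde\rho(\sigma)$ for every $\sigma\in\Sy{k}$ if and only if it lies in $\Sym^k(M_n(\C))$. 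Therefore $A=B'$, and by the double centralizer theorem applied to the semisimple algebra $B$ we obtain $B=B''=A'$ as well.

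The multiplicity-freeness is then a formal consequence: since $A$ and $B$ are mutually commuting semisimple subalgebras of $\End\bigl((\C^n)^{\otimes k}\bigr)$ which are each other's commutants, the general structure theorem for such pairs gives a decomposition
\[
(\C^n)^{\otimes k} \;\cong\; \bigoplus_{\lambda} V_\lambda \otimes W_\lambda
\]
where $\lambda$ runs over a set of pairs of irreducibles, $V_\lambda$ is an irreducible $A$-module, $W_\lambda$ is an irreducible $B$-module, and each pair $(V_\lambda,W_\lambda)$ occurs at most once; this is precisely the multiplicity-free statement for $\rho\times\tilde\rho$.

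The only genuine obstacle is the polarization/density step identifying $A$ with $\Sym^k(M_n(\C))$; the rest is either tautological (commutativity of the two actions) or a direct invocation of the double centralizer theorem for finite-dimensional semisimple algebras. Since the statement is classical I would at most sketch these steps and otherwise refer to a standard reference.
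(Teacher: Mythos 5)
The paper does not prove this theorem: it is explicitly introduced with ``We recall the following result, known as Schur-Weyl duality theorem,'' so it is cited as classical background with no argument supplied, and there is no paper proof to compare against. Your sketch is a correct rendering of the standard argument: the commutation of the two actions is tautological; the image algebra $A=\spann\{g^{\otimes k}:g\in U(n)\}$ equals $\Sym^k\bigl(\M_n(\C)\bigr)$ by Zariski density of $U(n)$ in $GL_n(\C)$ (and of $GL_n$ in $\M_n$) followed by polarization over a field of characteristic zero; under the canonical identification $\M_n(\C)^{\otimes k}\cong\End\bigl((\C^n)^{\otimes k}\bigr)$ the conjugation action of $\Sy{k}$ corresponds to permutation of tensor factors, so $B'=\Sym^k(\M_n(\C))=A$; and since $B$ is a semisimple quotient of $\C[\Sy{k}]$, the double centralizer theorem gives $A'=B''=B$. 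The multiplicity-free decomposition $\bigoplus_\lambda V_\lambda\otimes W_\lambda$ is then the standard structure theorem for a pair of mutually commuting semisimple algebras that are each other's commutants. Your self-assessment that the polarization/density step is the only nontrivial ingredient is accurate, and your plan to refer the rest to a standard reference is entirely appropriate given the paper's own treatment.
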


From the identification from Section \ref{subsec:matrix-structure-1}
it follows that we can view any 
$Z\in\left[\big(\uu(n)\gwia)^{\otimes \rank} \right]_{U(n)}$ as an 
endomorphism of $(\C^n)^{\otimes \rank}$ which commutes with the diagonal action of
$U(n)$. From Schur-Weyl duality (Theorem \ref{theo:generalized-schur-weyl}) it
follows that $Z$ can be identified with an element of the symmetric group algebra $\C[\Sy{\rank}]$.

Thus we have shown that
\[ \left[ \big( \uu(n)\gwia \big)^{\otimes \rank} \right]_{U(n)} \subseteq \C[\Sy{\rank}], \]
just as we claimed in Section \ref{subsec:intro-invariant-spaces}.
In fact, if we replace the left-hand side by its complexification and assume that $\rank\leq n$ then the equality holds, but we will not need this more general result.

\subsection{Unitarily invariant classical random matrices and their random moments}

For $Z\in\left[\End\big( (\C^n)^{\otimes \rank} \big)\right]_{U(n)}$,
we consider the function $\Tr_{\bullet} Z
\in \C[\Sy{\rank}]$ defined by
\[ \Tr_{\sigma} Z := \Tr (\sigma Z) \qquad \text{for any }\sigma\in\Sy{\rank},\]
where on the right-hand side we view $\sigma$ as an
endomorphism of $(\C^n)^{\otimes \rank}$ given by permutation of the factors. 
It is known --- see, for example \cite{CollinsSniady2004} --- that
$\Tr_{\bullet} Z$ gives a complete information about $Z$.

If a $U(n)$-invariant (classical) random element $\naturalMatrix$ in $\uu(n)\gwia$
is viewed as a random matrix in $\uu(n)\gwia \otimes_\R \C=\M_n(\C)$, then
$\Tr_{\bullet} \naturalMatrix^{\otimes \rank}$ is a
function on the symmetric group (with values being random variables).
It is
central and multiplicative with respect to the cycle decomposition of
permutations; it follows that the family
$\big(\E \Tr_{\sigma} \naturalMatrix^{\otimes \rank}\big)_{\sigma\in\Sy{r},\ r=1,2,\dots}$ can be interpreted as the
collection of mixed moments of the random variables corresponding to the
cycles $(1,\dots,\rankB)\in\Sy{\rankB}$, $s=1,2,\dots$:
\begin{equation}
\label{eq:strange-quantities}
\Tr_{(1,\dots,\rankB)} \left(\naturalMatrix^{\otimes s}\right)= \Tr \naturalMatrix^\rankB = n\ \trn \naturalMatrix^\rankB=
n \int_\C z^\rankB \ d\spectralNatural_\naturalMatrix.
\end{equation}
Notice that the
definition of the spectral measure
$\spectralNatural_\naturalMatrix$ has to be modified for $\naturalMatrix\in\uu(n)\gwia$, since
the latter corresponds to an antihermitian matrix, and therefore its spectral measure is
supported not on the real line $\R$, but on the imaginary line $i\R$.
In other words, \emph{all the information about the distribution of $\naturalMatrix$} 
(from the viewpoint of non-commutative probability theory) 
\emph{is contained in the
family of random variables \eqref{eq:strange-quantities}}.
The above quantities \eqref{eq:strange-quantities} are random variables which
have a very simple interpretation as random moments of the spectral measure of
$\naturalMatrix$ viewed as a random matrix. 
\emph{Thus the study of a unitarily invariant (classical)
random element in $\uu(n)\gwia$ reduces to studying the joint
distribution of the family \eqref{eq:strange-quantities} or, equivalently, to
studying the behavior of its random spectral measure $\spectralNatural_\naturalMatrix$.}

In this article we are concerned about a \emph{non-commutative} random vector in
$\uu(n)\gwia$ which corresponds to some representation of Lie algebra
$\uu(n)$; due to this noncommutativity, the discussion from the previous
paragraph does not apply directly. However, the scaling of the representations
considered in this article is such that asymptotically this noncommutativity
becomes in some sense negligible, therefore the spectral measure
$\spectralNatural_\naturalMatrix$ and its moments still remain very useful notions. 
Nevertheless we need to explain how to define the spectral measure for a
random matrix with non-commutative entries and we shall do it in the following.

\subsection{Random matrices with non-commutative entries and their spectral
measures}
\label{subsec:random-matrices-noncommutative}
Let $\naturalMatrix\in\M_n(\A)$ be a hermitian 
random matrix with non-commutative entries. 
If the joint distribution of the 
non-commutative random variables 
\[\left(\trn \naturalMatrix^\rank\right)_{\rank=1,2,\dots}\] 
coincides in the sense of non-commutative probability theory (i.e., the mixed moments coincide) with
the joint distribution of classical random variables of the form 
\[\left( \int_\R z^\rank \ d\spectralNatural_\naturalMatrix\right)_{\rank=1,2,\dots},\]
where $\spectralNatural_\naturalMatrix$ is a random
probability measure on $\R$, we
say that $\spectralNatural_\naturalMatrix$ is the \emph{(natural) spectral measure} of $\naturalMatrix$.

Clearly, for classical random matrices the above definition coincides with the
usual definition of the spectral measure \eqref{eq:definition-spectral-measure} under assumption that the joint distribution of traces (this time viewed as a probability measure) is uniquely determined by its moments.
In the general non-commutative case the existence and the uniqueness of the spectral measure are not obvious.

\subsection{Unitarily invariant random matrices}
Let $(\A,\E)$ be a non-com\-mu\-ta\-tive probability space. We say that
a random matrix with non-commutative entries $\naturalMatrix\in\M_n(\A)$ is \emph{unitarily
invariant} if for every $U\in U(n)$ the joint distribution of the entries the
matrix $\naturalMatrix=(y_{ij})_{1\leq i,j\leq n}$ coincides with the joint distribution of
the entries of the matrix $\naturalMatrix'=(y'_{ij})_{1\leq i,j\leq n}:=U \naturalMatrix U^{-1}$.

In the following, we use the notation
\[[\text{\emph{condition}}]=
\begin{cases}
 1 & \text{if \emph{condition} is true},\\
 0 & \text{otherwise.}
\end{cases}
\]

\begin{proposition}
\label{prop:7}
If\/ $\naturalMatrix\in\M_n(\C)\otimes \A$ is a unitarily invariant $n\times n$ random matrix with
non-commutative entries then for each integer $1\leq \rank\leq n$ and each partition
$\V$ of the set $[\rank]$, there exists a unique function $\Sy{r}\ni\pi \mapsto \kappa_{(\V,\pi)}\in\C$ with the property that for all choices of the indices
$i_1,\dots,i_\rank,j_1,\dots,j_\rank\in [n]$, we have
\begin{equation} 
\label{eq:definition-of-kappa}
k_\V \big( \naturalMatrix_{i_1 j_1}, \dots, \naturalMatrix_{i_\rank j_\rank} \big)= 
   \sum_{\pi\in \Sy{\rank}}  [j_1=i_{\pi(1)}] \cdots [j_\rank=i_{\pi(\rank)}]\
\kappa_{(\V,\pi)}.
\end{equation}
Furthermore, $\kappa_{(\V,\pi)}$ is non-zero only for $\pi\leq \V$. 


This function is explicitly given by 
\begin{equation} 
\label{eq:kappa_concrete}
\kappa_{(\V,\pi)} = k_\V \big( \naturalMatrix_{1  \pi(1)}, \dots, \naturalMatrix_{\rank
\pi(\rank) } \big).
\end{equation}
\end{proposition}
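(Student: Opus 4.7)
The plan is to split the proof into a cheap uniqueness argument that simultaneously yields the explicit formula \eqref{eq:kappa_concrete}, a more substantive existence argument via Schur--Weyl duality, and a short derivation of the support condition $\pi\leq\V$ from diagonal invariance. For uniqueness I would plug the indices $i_s = s$ and $j_s = \pi(s)$ into \eqref{eq:definition-of-kappa}. This is permitted because $k\leq n$, and with this choice the indices $i_1,\dots,i_k$ are pairwise distinct, so the condition $j_s = i_{\tau(s)}$ can be satisfied by at most one $\tau \in \Sy{k}$, namely $\tau = \pi$. The sum in \eqref{eq:definition-of-kappa} collapses to the single term $\kappa_{(\V,\pi)}$, forcing
$$ \kappa_{(\V,\pi)} = k_\V\big(X_{1\pi(1)}, \dots, X_{k\pi(k)}\big), $$
which is \eqref{eq:kappa_concrete}, and simultaneously shows that any function satisfying \eqref{eq:definition-of-kappa} is uniquely determined.

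For existence I would assemble the cumulants into the tensor
$$ K_\V := \sum_{i_1,\dots,i_k,j_1,\dots,j_k} k_\V\big(X_{i_1 j_1}, \dots, X_{i_k j_k}\big)\, e_{i_1 j_1}\otimes\cdots\otimes e_{i_k j_k} \in \M_n(\C)^{\otimes k}. $$
By unitary invariance of $X$ and multilinearity of cumulants, $K_\V$ is invariant under the diagonal conjugation action of $U(n)$ on $\M_n(\C)^{\otimes k} \cong \End\big((\C^n)^{\otimes k}\big)$. By Theorem \ref{theo:generalized-schur-weyl} the space of $U(n)$-invariant endomorphisms of $(\C^n)^{\otimes k}$ is spanned by the permutation operators $\pi \in \Sy{k}$, and since $k\leq n$ these operators are linearly independent; hence $K_\V$ admits a unique expansion $K_\V = \sum_\pi \kappa_{(\V,\pi)}\,\pi$ for some scalars $\kappa_{(\V,\pi)}$. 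A direct computation shows that, in the matrix-unit basis of $\M_n(\C)^{\otimes k}$, the coefficient of $e_{i_1 j_1}\otimes\cdots\otimes e_{i_k j_k}$ in $\pi$ equals $\prod_s [j_s = i_{\pi(s)}]$. Matching coefficients on both sides of this expansion reproduces \eqref{eq:definition-of-kappa}.

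For the support condition I would use the explicit formula \eqref{eq:kappa_concrete} together with invariance under the diagonal torus of $U(n)$. By multiplicativity of cumulants, $\kappa_{(\V,\pi)} = \prod_{B\in\V} k\big(X_{s\pi(s)} : s\in B\big)$. Conjugating $X$ by $U=\diag(z_1,\dots,z_n)$ with $|z_s|=1$ sends $X_{ab}$ to $z_a\bar{z}_b X_{ab}$, and multilinearity multiplies each factor by $\prod_{s\in B} z_s\bar{z}_{\pi(s)}$. For this to equal $1$ uniformly in the phases, the multisets $\{s:s\in B\}$ and $\{\pi(s):s\in B\}$ must coincide, so $\pi(B)=B$; demanding this for every block $B\in\V$ yields $C(\pi)\leq\V$, i.e.\ $\pi\leq\V$. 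The main obstacle I expect is the Schur--Weyl step: one must carefully pair the conjugation action of $U(n)$ on $\M_n(\C)^{\otimes k}$ with the diagonal action on $(\C^n)^{\otimes k}$, translate the matrix-unit basis into the endomorphism-of-tensor-space picture, and verify the indexing conventions for the $\pi$-coefficient formula; once these identifications are set up, the remaining steps are essentially bookkeeping.
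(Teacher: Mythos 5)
Your proof is correct, and the core of it — identifying the tensor of cumulants $(\Id\otimes k_\V)(X^{\otimes k})$ as a $U(n)$-invariant endomorphism of $(\C^n)^{\otimes k}$, hence an element of $\C[\Sy{k}]$ by Schur--Weyl, and then extracting \eqref{eq:kappa_concrete} by plugging $i_s=s$, $j_s=\pi(s)$ with $k\le n$ — is exactly the paper's argument. You handle uniqueness a little more explicitly than the paper (you spell out the collapse of the sum to a single term and why $k\le n$ makes this work), which is a welcome clarification.

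Where you genuinely deviate is the support condition $\kappa_{(\V,\pi)}=0$ unless $\pi\le\V$. The paper argues indirectly: it claims an analogue of \eqref{eq:definition-of-kappa} for moments, deduces $\E_\W(X_{1\pi(1)},\dots,X_{k\pi(k)})=0$ whenever $\pi\not\le\W$, and then observes that M\"obius inversion of \eqref{eq:moment-cumulant-formula} (an upper-triangular system) writes $k_\V$ as a combination of $(\E_\W)_{\W\le\V}$, so $k_\V$ also vanishes. Your route instead factors $\kappa_{(\V,\pi)}=\prod_{B\in\V} k(X_{s\pi(s)}:s\in B)$ by multiplicativity and applies invariance under the diagonal torus block by block: conjugation by $\diag(z_1,\dots,z_n)$ scales the block cumulant by $\prod_{s\in B}z_s\bar z_{\pi(s)}$, which is identically $1$ only if $\pi(B)=B$, and demanding this for all blocks is precisely $\pi\le\V$. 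This is more elementary and self-contained (it does not need the moment-side Schur--Weyl identity nor the triangularity of the moment--cumulant formula), and it makes transparent that only the maximal torus is needed to detect the support constraint, while the full unitary group is what collapses the commutant to $\C[\Sy{k}]$. Both arguments are valid; yours is arguably the cleaner one to read.
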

\begin{proof}
By rearranging the factors we may view $\naturalMatrix^{\otimes \rank}\in \big(\M_n(\C)\big)^{\otimes
\rank} \otimes \A^{\otimes \rank}$. 
The multilinear maps $\E_{\V}$ and $k_{\V}$ give rise to linear 
functionals $\E_{\V}:\A^{\otimes \rank}\rightarrow \C$ and $k_{\V}:\A^{\otimes \rank}\rightarrow \C$. 
The assumption that $\naturalMatrix$ is unitarily invariant implies that 
the element $(\Id\otimes \E_{\V})(\naturalMatrix^{\otimes \rank})  \in \big( \M_n(\C) \big)^{\otimes \rank}$
is invariant under the adjoint action of the unitary group for arbitrary partition $\V$ of $[\rank]$; 
it follows that $(\Id\otimes k_{\V})(\naturalMatrix^{\otimes \rank})  \in \big( \M_n(\C) \big)^{\otimes \rank}$ is invariant as well. 

From Schur-Weyl duality (Theorem
\ref{theo:generalized-schur-weyl}) it follows that $(\Id\otimes k_{\V})(\naturalMatrix^{\otimes \rank})$  can be identified with an element of the symmetric group algebra which will be denoted by $\kappa_\V\in\C[\Sy{\rank}]$. If we view this element as a function $\kappa_\V: \Sy{r} \ni \pi \mapsto \kappa_{(\V,\pi)}\in\C$ and calculate
\[ \Tr \Big[ \left( e_{j_1 i_1} \otimes \cdots \otimes e_{j_\rank i_\rank} \right)
\left[  (\Id\otimes k_{\V})(\naturalMatrix^{\otimes \rank})  \right] \Big] \]
in two different ways then the equality
\eqref{eq:definition-of-kappa} follows immediately. Thus we proved existence of the function $\kappa_{(\V,\pi)}$.

Equation
\eqref{eq:kappa_concrete} follows by appropriate choice of the indices in
\eqref{eq:definition-of-kappa}; thus we also proved uniqueness of $\kappa_{(\V,\pi)}$.

Assume that $\pi\not\leq\V$, then by multiplicativity the right-hand side of \eqref{eq:kappa_concrete} can be written as a product of expressions of the form 
$k_\rankB( \naturalMatrix_{i_1 j_1}, \dots, \naturalMatrix_{i_\rankB j_\rankB})$ and for each such an expression an analogue of \eqref{eq:definition-of-kappa} holds true  
as well. For the right-hand side of \eqref{eq:definition-of-kappa} to be non-zero we must have the equality of the multisets $(i_1,\dots,i_s)$ and $(j_1,\dots,j_s)$ which would imply that $\pi\leq \V$ which contradicts $\pi\not\leq\V$. Thus $\kappa_{(\V,\pi)}=0$ as claimed.
\end{proof}

\subsection{Higher order free probability}
The concept of \emph{higher order free probability} was introduced in a series of
papers \cite{HigherOrderFreeness1,HigherOrderFreeness2,HigherOrderFreeness3}.
In this article we deal with a simplified problem of fluctuations of a single
random matrix (as opposed to fluctuations of several random matrices). In this
section, we present the necessary notions and notations of higher order free
probability in this simplified setup.

Assume that for each $n\geq1$, an $n\times n$ random matrix $\naturalMatrix^{(n)}$ with non-commutative entries is given.
When there is no possible confusion, we omit the explicit
dependence on $n$ and we will simply write $\naturalMatrix=\naturalMatrix^{(n)}=(y_{ij})_{1\leq i,j\leq
n}$. We systematically assume that $\naturalMatrix$ is unitarily invariant.

Two kinds of quantities can be used to describe properties of the random matrix
$\naturalMatrix$. The \emph{macroscopic} quantities describe the probabilistic behavior of
the family of the traces $(\Tr \naturalMatrix^\rank)_{\rank\geq 1}$. We are  interested, up to some
normalization, in the tensor cumulants of the form:
\begin{equation}
\label{eq:cumulants-of-traces}
k_l(\Tr \naturalMatrix^{p_1},\dots,\Tr \naturalMatrix^{p_l}).  
\end{equation}
As we will see, when $\naturalMatrix=\rho$ is a representation of the
unitary group $U(n)$,
one can treat $(\Tr \naturalMatrix^\rank)$ as a family of classical random
variables. Therefore the tensor cumulant in \eqref{eq:cumulants-of-traces} is in
fact a classical cumulant.

The \emph{microscopic} quantities describe the probabilistic behavior of the entries
of the random matrix $\naturalMatrix$; in particular we study the tensor cumulants
\begin{equation}
\label{eq:matrix-cumulants}
 \kappa_{p_1,\dots,p_l} := 
k_\rank(\naturalMatrix_{1 \gamma(1)}, \dots, \naturalMatrix_{\rank \gamma(\rank)}), 
\end{equation}
where $\rank=p_1+\cdots+p_l$ and $\gamma$ is the following permutation:
\begin{multline}
\label{eq:gamma}
\gamma:=(1,2,\dots,p_1)(p_1+1,p_1+2,\dots,p_1+p_2)\cdots \\
(p_1+\cdots+p_{l-1}+1,p_1+\cdots+p_{l-1}+2,\dots,p_1+\cdots+p_{l}).
\end{multline}
In the usual context of random matrix theory where the entries of the
matrix $\naturalMatrix$ commute, the quantities $\kappa_{p_1,\dots,p_l}$ and their
products are sufficient to describe the joint distribution of the entries of
$\naturalMatrix$. In order to deal with the case of random matrices with non-commutative
entries we need more information. 
It turns out that it is enough to
consider the family of quantities $\kappa_{(\V,\pi)}$ given by
\eqref{eq:kappa_concrete}. In particular, for an appropriate choice of
$(\V,\pi)$, they coincide with the quantities \eqref{eq:matrix-cumulants}:
\[ \kappa_{(\mathbf{1}_\rank,\gamma)} =  \kappa_{p_1,\dots,p_l} . \]

Higher order free probability theory studies the limits of the
quantities \eqref{eq:cumulants-of-traces} and \eqref{eq:matrix-cumulants} after
appropriate normalization, as the size $n$ of the matrix $\naturalMatrix$ tends
to infinity. We need to revisit the
proofs from the paper \cite{HigherOrderFreeness3} in order to ensure they
also apply in our non-commutative situation.

\subsection{Relation between macroscopic and microscopic quantities}
The following theorem gives the key relation between the macroscopic and
microscopic quantities describing a random matrix with non-commuting entries.

\begin{theorem}
\label{theo:moments-of-unitary}
If\/ $\naturalMatrix$ is an $n\times n$ unitarily invariant random matrix with non-commuting
entries then
\begin{equation}
\label{eq:moments-of-unitary}
 k_l(\Tr \naturalMatrix^{p_1},\dots, \Tr \naturalMatrix^{p_l})= 
\sum_{(\V,\pi)} \kappa_{(\V,\pi)}\  n^{\#(\gamma \pi^{-1})},
\end{equation}
where $\gamma$ is given by \eqref{eq:gamma} and the sum runs over partitioned
permutations $(\V,\pi)$ of the set $[\rank]$ such that
$\V\vee C(\gamma)=\mathbf{1}_{\rank}$, where $\rank=p_1+\cdots+p_l$.
\end{theorem}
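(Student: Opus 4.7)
The plan is to combine three ingredients: the expansion of each trace as a sum over matrix indices, a non-commutative analogue of the Leonov--Shiryaev cumulants-of-products formula, and the structural description of entry cumulants provided by Proposition \ref{prop:7}.

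First I would set $k=p_1+\cdots+p_l$ and number the $k$ matrix factors consecutively so that the $j$-th cycle of the permutation $\gamma$ from \eqref{eq:gamma} collects precisely the factors appearing inside $\Tr X^{p_j}$. In this indexing one has
$$\Tr X^{p_1}\cdots\Tr X^{p_l}=\sum_{i_1,\dots,i_k=1}^{n} X_{i_1 i_{\gamma(1)}}\cdots X_{i_k i_{\gamma(k)}},$$
the product on the right being taken in the order dictated by traversing each cycle of $\gamma$ and then concatenating the resulting cycle-products. By multilinearity of the tensor cumulant,
$$k_l(\Tr X^{p_1},\dots,\Tr X^{p_l})=\sum_{i_1,\dots,i_k=1}^{n} k_l(A_1,\dots,A_l),$$
where $A_j$ denotes the ordered product of the variables $X_{i_r i_{\gamma(r)}}$ with $r$ running through the $j$-th cycle of $\gamma$.

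Next I would apply the non-commutative Leonov--Shiryaev formula for cumulants of products --- the same tool used in the proof of Lemma \ref{lem:cumulants-and-commutators} --- to rewrite
$$k_l(A_1,\dots,A_l)=\sum_{\V} k_{\V}\bigl(X_{i_1 i_{\gamma(1)}},\dots,X_{i_k i_{\gamma(k)}}\bigr),$$
where $\V$ ranges over partitions of $\{1,\dots,k\}$ satisfying the connectivity condition $\V\vee C(\gamma)=\mathbf{1}_k$. This step rests only on the moment-cumulant relation \eqref{eq:moment-cumulant-formula} and on multiplicativity of $k_\V$, so it transfers from the classical setting to the tensor cumulants considered here. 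With this in hand, Proposition \ref{prop:7} yields
$$k_{\V}\bigl(X_{i_1 i_{\gamma(1)}},\dots,X_{i_k i_{\gamma(k)}}\bigr)=\sum_{\pi\le\V}[\,i_{\gamma(1)}=i_{\pi(1)}\,]\cdots[\,i_{\gamma(k)}=i_{\pi(k)}\,]\,\kappa_{(\V,\pi)}.$$
The constraints $i_{\gamma(r)}=i_{\pi(r)}$ for $r=1,\dots,k$ amount, after the substitution $s=\pi(r)$, to $i_{\gamma\pi^{-1}(s)}=i_s$ for all $s$, i.e.\ to the requirement that $s\mapsto i_s$ be constant on the cycles of the permutation $\gamma\pi^{-1}$. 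Summing over $i_1,\dots,i_k\in\{1,\dots,n\}$ therefore produces exactly the factor $n^{\#(\gamma\pi^{-1})}$ associated with each partitioned permutation $(\V,\pi)$, and assembling the three displays proves \eqref{eq:moments-of-unitary}.

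The point that will require the most care is the second step: one has to justify the non-commutative Leonov--Shiryaev formula while keeping track of the linear order inside each block (which the tensor cumulants $k_\V$ retain, in contrast to the free or Boolean cases), and verify that the connectivity condition on $\V$ matches exactly $\V\vee\gamma=\mathbf{1}_k$ used in the statement of the theorem. Once this combinatorial bookkeeping is completed, the conclusion is a direct combination of Proposition \ref{prop:7} with the index count above.
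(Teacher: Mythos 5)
Your proof is correct and takes essentially the same route as the paper: expand the traces over matrix indices, pass from the product of traces to a sum of entry cumulants $k_\V$ indexed by partitions satisfying the connectedness condition $\V\vee\gamma=\mathbf{1}_k$, and then substitute Proposition \ref{prop:7} and count indices to obtain the factor $n^{\#(\gamma\pi^{-1})}$. The only organizational difference is that you invoke the non-commutative Leonov--Shiryaev formula as a named black box, whereas the paper, wishing to avoid an external reference, reproves exactly that identity inline by working on the moment side (expanding $\E_\W$, applying the moment--cumulant relation, inserting Proposition \ref{prop:7}) and then verifying that the resulting restricted sum is multiplicative and satisfies \eqref{eq:moment-cumulant-formula}, which identifies it with $k_\mathcal{U}$ by uniqueness. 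Your concern about the linear order inside blocks is not a real obstacle here: the tensor cumulants $k_\V$ are the classical cumulants of tensor-independent copies, and Leonov--Shiryaev is a formal identity on the partition lattice derived from the moment--cumulant relation and multiplicativity alone, precisely the two properties the paper checks.
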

\begin{proof}
This result follows from \cite[Equation (22)]{HigherOrderFreeness3}; however
for the sake of completeness and since in the aforementioned paper the cumulants
were defined in a seemingly different way via M\"obius inversion formula, we
present an alternative proof here.

There is a bijective correspondence between partitions $\widetilde{\W}$ of the set
$[l]$ and partitions $\W$ of the set $[\rank]$ such that $\W\geq
\gamma$; this bijection is given by replacing each element of the set $[l]$ by the block
corresponding to the appropriate cycle of $\gamma$.
We have
\begin{multline*}
\E_{\widetilde{\W}} \big[ \Tr \naturalMatrix^{p_1}, \dots ,\Tr  \naturalMatrix^{p_l} \big]\\
\begin{aligned}
 = &  \sum_{1\leq i_1,\dots,i_\rank\leq n} \E_\W \big[ \naturalMatrix_{i_1 i_{\gamma(1)}}, \dots,
\naturalMatrix_{i_\rank i_{\gamma(\rank)}}  \big]   \\ 
 = & \sum_{\V\leq \W}\ \sum_{1\leq i_1,\dots,i_\rank\leq n} k_\V \big[ \naturalMatrix_{i_1
i_{\gamma(1)}}, \dots, \naturalMatrix_{i_\rank i_{\gamma(\rank)}}  \big]  \\ 
= & \sum_{\pi\leq\V\leq \W}\ \sum_{1\leq i_1,\dots,i_\rank\leq n} 
[i_{\gamma(1)}=i_{\pi(1)}]\cdots [i_{\gamma(\rank)}=i_{\pi(\rank)}]\
\kappa_{(\V,\pi)}\\
= & \sum_{\pi\leq\V\leq\W} n^{\#(\gamma \pi^{-1})}\ \kappa_{(\V,\pi)},
\end{aligned}
\end{multline*}
where the third equality follows from 
Proposition \ref{prop:7}.
For a partition $\mathcal{U}\geq \gamma$ we define: 
\[ k_{\widetilde{\mathcal{U}}}\big[ \Tr \naturalMatrix^{p_1}, \dots ,\Tr  \naturalMatrix^{p_l} \big]:=  
\sum_{\substack{\pi\leq\V\leq\mathcal{U} \\ C(\gamma)\vee \V=\mathcal{U} }}
n^{\#(\gamma \pi^{-1})}\ \kappa_{(\V,\pi)}.
\]
According to this definition,  $k_{\widetilde{\mathcal{U}}}$ fulfills the moment-cumulant formula 
\eqref{eq:moment-cumulant-formula}. Since the cumulant is uniquely determined by this property, this finishes the proof.
\end{proof}

\subsection{Decay of the cumulants of entries}
\label{sec:star}

All considerations in this paper so far are exact and non-asymptotic.
In this section, we study
asymptotics of random matrices with non-commutative entries as
the size of the matrix tends to infinity.

For each $n\geq 1$, let $\naturalMatrix^{(n)}$ be an $n\times n$ unitarily invariant random
matrix with non-commuting entries. As before, we make the dependence in $n$
implicit and instead of $\naturalMatrix^{(n)}$ we simply write $\naturalMatrix$. This notation
applies to other quantities as well (for example $\kappa_{(\V,\pi)}$ depends
implicitly on $n$).

The following theorem is at the same time a definition of the quantities
$K_{(\V,\pi)}$ and $M_{p_1,\dots,p_l}$.

\begin{theorem}
\label{theo:moments-cumulants-formula-higher-order}
Assume that for every partitioned permutation $(\V,\pi)$ the limit
\begin{equation}
\label{eq:higher-order-cumulants-as-limits}
 K_{(\V,\pi)}:=\lim_{n\to\infty}  n^{|(\V,\pi)|}\ \kappa_{(\V,\pi)}  
\end{equation}
exists and is finite.
Then 
\begin{equation}
\label{eq:moments-cumulants-free}
 M_{p_1,\dots,p_l}:=\lim_{n\to\infty} n^{2(l-1)}\  k_l(\trn \naturalMatrix^{p_1},\dots, \trn \naturalMatrix^{p_l})=
\sum_{(\V,\pi)\leq (\mathbf{1}_\rank,\gamma)} K_{(\V,\pi)},
\end{equation}
where $\gamma$ is given by \eqref{eq:gamma}.
\end{theorem}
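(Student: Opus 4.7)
The plan is to apply Theorem \ref{theo:moments-of-unitary}, substitute the asymptotic \eqref{eq:higher-order-cumulants-as-limits}, and use the triangle inequality for the length of partitioned permutations to single out the terms that survive the limit $n\to\infty$.

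First, by multilinearity of tensor cumulants,
\begin{equation*}
k_l(\tr X^{p_1},\dots,\tr X^{p_l}) = n^{-l}\, k_l(\Tr X^{p_1},\dots,\Tr X^{p_l}).
\end{equation*}
Multiplying by $n^{2(l-1)}$ and substituting Theorem \ref{theo:moments-of-unitary} gives
\begin{equation*}
n^{2(l-1)}\, k_l(\tr X^{p_1},\dots,\tr X^{p_l}) = \sum_{(\V,\pi)} \kappa_{(\V,\pi)}\, n^{l-2+\#(\gamma\pi^{-1})},
\end{equation*}
the sum being over partitioned permutations of $\{1,\dots,k\}$ with $\V\vee C(\gamma)=\mathbf{1}_k$. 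Setting $\sigma:=\pi^{-1}\gamma$ and noting that $\gamma\pi^{-1}$ is conjugate to $\sigma$, so $\#(\gamma\pi^{-1})=k-|\sigma|$, the hypothesis $\kappa_{(\V,\pi)}=n^{-|(\V,\pi)|}(K_{(\V,\pi)}+o(1))$ rewrites each summand as $(K_{(\V,\pi)}+o(1))\, n^{E(\V,\pi)}$ with
\begin{equation*}
E(\V,\pi) := (k+l-2) - \bigl(|(\V,\pi)| + |\sigma|\bigr).
\end{equation*}

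The heart of the argument is to show $E(\V,\pi)\leq 0$ on the summation set, with equality iff $(\V,\pi)\leq(\mathbf{1}_k,\gamma)$. This is the triangle inequality for the length of partitioned permutations---implicit in the minimal-factorization description of $|\cdot|$ recalled in Section \ref{sec:noncommutative}---applied to $(\V,\pi)$ and $(\mathbf{0},\sigma)$:
\begin{equation*}
|(\V,\pi)| + |\sigma| \geq |(\V\vee C(\sigma),\, \pi\sigma)|.
\end{equation*}
One checks that $\V\vee C(\sigma)=\mathbf{1}_k$ using the summation hypothesis: since $C(\pi)\leq\V$ and $C(\pi)\vee C(\sigma)=C(\pi)\vee C(\gamma)$ (both equal the orbit partition of the subgroup $\langle\pi,\gamma\rangle$), one has $\V\vee C(\sigma)\geq\V\vee C(\gamma)=\mathbf{1}_k$. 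The right-hand side therefore equals $|(\mathbf{1}_k,\gamma)|=k+l-2$, so $E(\V,\pi)\leq 0$; equality characterizes exactly the case where the product $(\V,\pi)\cdot(\mathbf{0},\sigma)$ is defined and equals $(\mathbf{1}_k,\gamma)$, which is the definition of $(\V,\pi)\leq(\mathbf{1}_k,\gamma)$.

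Passing to the limit, only the equality terms survive, each contributing $K_{(\V,\pi)}$; this is exactly the right-hand side of \eqref{eq:moments-cumulants-free} and simultaneously defines $M_{p_1,\dots,p_l}$. The main obstacle is the triangle-inequality step together with the verification of the equality case; everything else is exponent bookkeeping.
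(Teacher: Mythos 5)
Your proof is correct and follows essentially the same route as the paper: apply Theorem~\ref{theo:moments-of-unitary}, renormalize (this is exactly the paper's Equation~\eqref{eq:famous-proof}), substitute the hypothesis on $\kappa_{(\V,\pi)}$, and invoke the triangle inequality for partitioned-permutation lengths with the equality case characterizing $(\V,\pi)\leq(\mathbf{1}_k,\gamma)$. You spell out more of the bookkeeping — in particular the verification that $\V\vee C(\sigma)=\mathbf{1}_k$ and the identification of the equality case with the definition of $\leq$ — which the paper leaves implicit by citing \cite{HigherOrderFreeness3}, but the underlying argument is the same.
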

\begin{proof}
This is a special case of
\cite[Equation (35)]{HigherOrderFreeness3}. The only difficulty is that the paper
\cite{HigherOrderFreeness3} deals with random matrices 
with commuting entries. Therefore one has
to revisit the original proof in order to ensure that it applies to the
non-commutative situation. This is indeed the case thanks to Theorem
\ref{theo:moments-of-unitary}.

Since for other results in this Section we will need some basic ideas behind this proof, we will present here a short outline.
The proof from
\cite{HigherOrderFreeness3} relies on the fact that one can write Equation
\eqref{eq:moments-of-unitary}
in the form
\begin{multline}
\label{eq:famous-proof}
n^{2(l-1)}\ k_l(\trn \naturalMatrix^{p_1},\dots, \trn \naturalMatrix^{p_l})= \\ 
\sum_{\substack{(\V,\pi) \\ \V\vee C(\gamma)=\mathbf{1}_{\rank}}} 
\left( n^{|(\V,\pi)|} \kappa_{(\V,\pi)}\right) 
\frac{1}{n^{|(\mathbf{0},\gamma \pi^{-1})|+|(\V,\pi)|-|(\mathbf{1}_\rank,\gamma)|}}.
\end{multline}
The result follows from the fact that the following triangle inequality holds
true
\[{|(\mathbf{0},\gamma \pi^{-1})|+|(\V,\pi)|-|(\mathbf{1}_\rank,\gamma)|}\geq 0 \]
with the equality holding if and only if $(\V,\pi)\leq (\mathbf{1}_\rank,\gamma)$.
\end{proof}

If the above limits \eqref{eq:higher-order-cumulants-as-limits} and \eqref{eq:moments-cumulants-free} exist, it is convenient to think that the sequence
$\naturalMatrix^{(n)}$ of random matrices converges to some (abstract) limit object
$\naturalMatrix^{(\infty)}$. In the context of higher order free probability the quantities
$K_{(\V,\pi)}$ are called \emph{higher order free cumulants} of $\naturalMatrix^{(\infty)}$
and the quantities $M_{p_1,\dots,p_l}$ are called \emph{higher order moments}
of $\naturalMatrix^{(\infty)}$, cf \cite{HigherOrderFreeness3}.

The above theorem shows that the microscopic quantities describing a random
matrix uniquely determine their macroscopic counterparts. For our purposes it is
necessary to have also the opposite and to express the microscopic quantities
in terms of their macroscopic counterparts. However, in the
non-commutative case, this is not possible in general since the microscopic
quantities $\kappa_{(\V,\pi)}$ contain much more information than the
macroscopic quantities \eqref{eq:cumulants-of-traces}, as can be seen by 
a simple cardinality argument.
In order to have the description in the
opposite direction, one needs to assume that the entries of the matrices
under consideration asymptotically commute.

\subsection{Converse of the condition from Section \ref{sec:star}}
\label{subsec:converse_condition}

We say that a sequence $(\naturalMatrix)=(\naturalMatrix^{(n)})$ of unitarily invariant random matrices
(with non-commutative entries) has \emph{asymptotically vanishing commutators} 
up to degree $\len_0$ if 
\begin{multline}
\label{eq:vanishing-commutators}
k_{\V'} \big( \naturalMatrix_{1  \pi(1)}, \dots, \naturalMatrix_{i-1, \pi(i-1)},[\naturalMatrix_{i,
\pi(i)} ,\naturalMatrix_{i+1, \pi(i+1)}], 
\dots, \naturalMatrix_{\rank \pi(\len) }\big) \\
= o\left(\frac{1}{n^{|(\V,\pi)|}}\right)
\end{multline}
holds true for any partitioned permutation $(\V,\pi)$ of the set
$[\len]$, for any $\len\leq \len_0$ and any value of $i$ such that $i$ and
$i+1$ are connected by $\V$ and where $\V'$ should be understood as in Lemma
\ref{lem:cumulants-and-commutators}.

The following lemma and theorem provide the key induction step for the proof of
the main result of this paper, Theorem \ref{theo:main_theorem-RELOADED}.

\begin{lemma}
\label{lem:almost-commutes}
Let $(\naturalMatrix)$ be a sequence of random matrices which has asymptotically vanishing
commutators up to degree $\len_0$
and assume that the limits
\eqref{eq:higher-order-cumulants-as-limits} exist and are finite for all
partitioned permutations of the sets $[\len]$ for every
$\len<\len_0$. 

Then  
\[\lim_{n\to\infty}  n^{|(\V,\pi)|} \big( \kappa_{(\V,\pi)}-\kappa_{(\W,\sigma)}
\big)=0\]
whenever $(\V,\pi)$ and $(\W,\sigma)$ are conjugate partitioned  permutations of
the set $[\len]$ for $\len\leq \len_0$. 
\end{lemma}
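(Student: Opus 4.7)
The plan is to reduce the general conjugation to a sequence of conjugations by adjacent transpositions, and then to analyze that one-step case by splitting into two sub-cases. Since every permutation is a product of adjacent transpositions $(i,i+1)$, I fix such a decomposition of a conjugator $\tau$ (satisfying $\sigma=\tau\pi\tau^{-1}$ and $\W=\tau(\V)$) and apply its factors one at a time to obtain a chain $(\V,\pi)=(\V_0,\pi_0)\to\cdots\to(\V_N,\pi_N)=(\W,\sigma)$ of conjugate partitioned permutations. Along this chain both the length $|(\V_j,\pi_j)|=|(\V,\pi)|$ and the underlying set $\{1,\dots,\len\}$ are preserved, so the hypothesis \eqref{eq:vanishing-commutators} continues to apply at each step. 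By telescoping and the triangle inequality it suffices to prove the one-step estimate $\kappa_{(\V_j,\pi_j)}-\kappa_{(\V_{j+1},\pi_{j+1})}=o\bigl(n^{-|(\V,\pi)|}\bigr)$.

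For the one-step bound, let $s=(i,i+1)$ and write $Z_a:=X_{a,\pi(a)}$, so that $\kappa_{(\V,\pi)}=k_\V(Z_1,\dots,Z_k)$ by \eqref{eq:kappa_concrete}. Using the unitary invariance of $X$ under the permutation matrix associated to $s$, together with the identity $s^{-1}\sigma s=\pi$ (a rewriting of $\sigma=s\pi s^{-1}$), the analogous formula for $\kappa_{(\W,\sigma)}$ becomes
\[
\kappa_{(\W,\sigma)}=k_\W\bigl(Z_{s(1)},Z_{s(2)},\dots,Z_{s(k)}\bigr).
\]
Thus the discrepancy between the two $\kappa$'s is caused jointly by the swap of arguments at positions $i,i+1$ and by the relabeling $\W=s(\V)$ of the block structure.

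If $i$ and $i+1$ lie in different blocks of $\V$, then $\W$ differs from $\V$ only by interchanging the block memberships of these two labels; a direct inspection of the sorted positions of the two affected blocks shows that the corresponding block cumulants of $\W$, evaluated on $(Z_{s(a)})_a$, agree literally with the corresponding block cumulants of $\V$ evaluated on $(Z_a)_a$, and hence $\kappa_{(\V,\pi)}=\kappa_{(\W,\sigma)}$ on the nose in this sub-case. If instead $i$ and $i+1$ lie in the same block of $\V$, then $\W=\V$ and Lemma \ref{lem:cumulants-and-commutators} yields
\[
\kappa_{(\V,\pi)}-\kappa_{(\W,\sigma)}=k_{\V'}\bigl(\dots,[X_{i,\pi(i)},X_{i+1,\pi(i+1)}],\dots\bigr),
\]
which is precisely the quantity controlled by the hypothesis \eqref{eq:vanishing-commutators}, and is therefore $o\bigl(n^{-|(\V,\pi)|}\bigr)$.

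The main obstacle is the exact cancellation in the different-blocks case: the two operations (block relabeling by $s$ and argument swap at positions $i,i+1$) undo each other block by block, rescuing us from having to estimate cumulants containing commutators of non-adjacent entries---a class of quantities to which the hypothesis \eqref{eq:vanishing-commutators} does not directly apply. A minor bookkeeping point is merely to verify that conjugation preserves both the length $|(\V,\pi)|$ and the cardinality of the underlying set, so that the one-step estimate at index $j$ gives an error $o\bigl(n^{-|(\V,\pi)|}\bigr)$ uniformly along the chain.
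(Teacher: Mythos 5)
Your proof is correct and follows essentially the same strategy as the paper's: decompose the conjugator into adjacent transpositions, and for each one-step conjugation appeal to Lemma~\ref{lem:cumulants-and-commutators} together with \eqref{eq:kappa_concrete} and unitary invariance. The only organizational difference is that the paper first uses multiplicativity of cumulants to reduce to $\V=\W=\mathbf{1}_\len$ before invoking the adjacent-transposition chain, whereas you keep the general partition and instead observe that when $i$ and $i+1$ lie in different blocks the one-step difference cancels exactly --- a valid reshuffling of the same ingredients rather than a genuinely distinct argument.
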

\begin{proof}
From the multiplicativity of cumulants it follows that it is enough to prove
the lemma in the case when $\V=\W=\mathbf{1}$ is the partition consisting of
only one
block.  

It is possible to find a finite sequence of partitioned permutations $(\V,\pi)=(\textbf{1},\pi_0),\dots,(\textbf{1},\pi_l)=(\W,\sigma)$ which begins and ends with our partitioned permutations $(\V,\pi)$ and $(\W,\sigma)$ and such that each pair of neighbors in this sequence is conjugate by a transposition $(i,i+1)$ interchanging two neighboring elements. For this reason it is enough to show the lemma under additional assumption that
$\pi$ and $\sigma$ are conjugate by a transposition $(i,i+1)$ interchanging two
neighboring elements. But under the above assumptions this is a direct
application of Lemma \ref{lem:cumulants-and-commutators} and Equation
\eqref{eq:kappa_concrete}.
\end{proof}

\begin{theorem}
\label{theo:inductive-step}
Let $(\naturalMatrix)$ be a sequence of random matrices which has asymptotically
vanishing commutators up to degree $\len_0$. Assume that the limit
\eqref{eq:higher-order-cumulants-as-limits} exists for all partitioned
permutations $(\V,\pi)$ of the set $[\len]$ for all $\len<\len_0$.
Assume also that the limit \eqref{eq:moments-cumulants-free}
exists and is finite for all integers $p_1,\dots,p_l\geq 1$ such that
$p_1+\cdots+p_l\leq \len_0$.

Then the limit \eqref{eq:higher-order-cumulants-as-limits} exists for any
partitioned permutation $(\V,\pi)$ of the set $[\len]$ for $\len\leq
\len_0$.
Furthermore, $K_{(\V,\pi)}$ depends only on the conjugacy class of the
partitioned permutation $(\V,\pi)$.
\end{theorem}
\begin{proof}
We prove the claim by induction
with respect to $\len_0$.
Looking at Equation \eqref{eq:famous-proof}, one notices that
from the inductive hypothesis and multiplicativity of cumulants, 
every summand on the right hand side which corresponds to $\V$ consisting of
more than one block converges to a finite limit.
Thanks to Lemma \ref{lem:almost-commutes}, each summand for which $\V=\mathbf{1}_\rank$
consists of only one block can be rewritten in the form 
\[\bigg[ n^{|(\mathbf{1}_\rank,\gamma)|}
\kappa_{(\mathbf{1}_\rank,\gamma)} + o(1) \bigg] 
\frac{1}{n^{|(\mathbf{0},\gamma \pi^{-1})|+|(\V,\pi)|-|(\mathbf{1}_\rank,\gamma)|}},
\]
where $\gamma=\gamma_{p_1,\dots,p_l}$ with $p_1\geq \cdots\geq p_l$ given by
\eqref{eq:gamma} is a permutation conjugate to $\pi$ with cycles arranged in a
special way.

Thus we can view the collection of equations \eqref{eq:famous-proof} over
$p_1\geq \cdots\geq p_l$ such that $p_1+\cdots+p_l=\len_0$ as a system of
equations with the variables
$Q_{p_1,\dots,p_l}:=n^{|(\mathbf{1},\gamma_{p_1,\dots,p_l})|}
\kappa_{(\mathbf{1},\gamma_{p_1,\dots,p_l})}$, over $p_1\geq \cdots\geq p_l$
with $p_1+\cdots+p_l=\len_0$. In the limit $n\to\infty$ this system of equations
has a particularly simple form given by \eqref{eq:moments-cumulants-free} hence
it is upper-triangular (the relation $\leq$ on the set of partitioned permutations can be extended to a linear order). Therefore it is non-singular and by continuity it
remains non-singular for $n$ in some neighborhood of infinity. Solving this
system of equations thanks to Cramer formulas shows that the limit
\[ \lim_{n\to\infty} n^{|(\mathbf{1}_\rank,\gamma_{p_1,\dots,p_l})|}\ 
\kappa_{(\mathbf{1}_\rank,\gamma_{p_1,\dots,p_l})} \]
exists.

For an arbitrary partitioned permutation $(\V,\pi)$ the existence of the limit
follows from Lemma \ref{lem:almost-commutes} and multiplicativity of cumulants. 
Lemma \ref{lem:almost-commutes} also implies that the limit depends only on the
conjugacy class.
\end{proof}

\subsection{Stability of decay}

The decay speed of cumulants of random matrix moments seen in
\eqref{eq:moments-cumulants-free} is rather typical. The following lemma shows that
this kind of decay is stable under taking polynomial functions.

\begin{lemma}
\label{lem:decay-of-products}
For each $n\geq 1$, let $(I_\alpha^{(n)})_{\alpha\in A}$ be a collection
random variables. 
Assume that for any $l\geq 1$ and any choice of
$\alpha_1,\dots,\alpha_l$ the limit
\[ \lim_{n\to\infty} n^{2l-2}\ k(I_{\alpha_1},\dots,I_{\alpha_l}) \]
exists and is finite.
 
Then, the limit 
\[ \lim_{n\to\infty} n^{2l-2}\ k(P_1,\dots,P_l) \]
exists and is finite for any polynomials $P_1,\dots,P_l$ in variables
$(I_\alpha)$.
\end{lemma}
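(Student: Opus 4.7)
The plan is to reduce to the case of monomials by multilinearity of cumulants, then apply a Leonov--Shiryaev-type expansion to write $k(P_1,\dots,P_l)$ as a sum of cumulants $k_\pi$ of the atomic variables $I_\alpha$, and finally combine the hypothesis with a combinatorial inequality bounding the number of blocks of a ``connecting'' partition.

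First, by linearity in each argument we may assume each $P_i$ is a monomial $I_{\alpha_{i,1}}\cdots I_{\alpha_{i,m_i}}$. Set $m=m_1+\cdots+m_l$ and let $\hat\tau$ denote the partition of $\{1,\dots,m\}$ whose blocks correspond to the groups of indices belonging to each $P_i$. The Leonov--Shiryaev-type formula for tensor cumulants (an immediate consequence of the moment-cumulant inversion and the multiplicativity of the family $(k_\V)$ recalled in Section~\ref{subsec:tincc}) gives
$$ k(P_1,\dots,P_l)\;=\;\sum_{\substack{\pi\\ \pi\vee\hat\tau=\mathbf{1}_m}} k_\pi\bigl(I_{\alpha_{1,1}},\dots,I_{\alpha_{l,m_l}}\bigr).$$

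Second, for each such $\pi$ with blocks $B_1,\dots,B_r$, multiplicativity of tensor cumulants gives $k_\pi=\prod_j k\bigl(I_\alpha:\alpha\in B_j\bigr)$; by the standing hypothesis applied blockwise, each factor is $O\bigl(n^{-(2|B_j|-2)}\bigr)$, so $k_\pi=O\bigl(n^{-(2m-2r)}\bigr)$. The combinatorial input is the elementary estimate $r+l\leq m+1$ whenever $\pi\vee\hat\tau=\mathbf{1}_m$: view the bipartite graph whose vertices are the blocks of $\pi$ on one side and the blocks of $\hat\tau$ on the other, with an edge for every $i\in\{1,\dots,m\}$ joining its $\pi$-block to its $\hat\tau$-block. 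The hypothesis $\pi\vee\hat\tau=\mathbf{1}_m$ is exactly the statement that this graph is connected, so it has at least $r+l-1$ edges; but the total number of edges is $m$. Consequently $2l-2\leq 2(m-r)$, with equality exactly when the graph is a tree.

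Third, combining the two previous points yields $n^{2l-2}\,k_\pi = O(1)$ for every partition $\pi$ in the sum, with $n^{2l-2}\,k_\pi\to 0$ whenever the bipartite graph has a cycle (strict inequality). When the graph is a tree, the normalization $n^{2l-2}$ distributes exactly as $\prod_j n^{2|B_j|-2}$ across the blocks, so that
$$ n^{2l-2}k_\pi\;=\;\prod_{j=1}^{r}\Bigl(n^{2|B_j|-2}\,k(I_\alpha:\alpha\in B_j)\Bigr)$$
and each factor converges to a finite limit by the hypothesis. Summing over the finitely many tree-type $\pi$ produces the claimed limit.

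The main obstacle is the combinatorial step in the second paragraph, but once the bipartite graph is set up the bound is a single application of ``connected graph on $r+l$ vertices has at least $r+l-1$ edges''. The rest is a routine application of the moment-cumulant machinery already used throughout Section~\ref{sec:unitarily}.
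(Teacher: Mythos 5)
Your proof is correct and follows the same route as the paper: reduce to monomials by multilinearity, apply the Leonov--Shiryaev expansion for cumulants of products, and then use a combinatorial inequality on partitions to extract the decay. The paper simply invokes the abstract lattice bound $|\V\vee\W|\leq|\V|+|\W|$ for the connecting condition $\V\vee\W=\mathbf{1}$, whereas you spell out the standard proof of that bound via the bipartite incidence graph (connectedness forces at least $r+l-1$ edges out of $m$); this is the same inequality in different clothing, and your version additionally identifies the leading (tree-type) terms explicitly, which the paper leaves implicit. No gap.
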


\begin{proof}
The assumption implies that
\begin{equation}
\label{eq:good-decay}
 \lim_{n\to\infty} n^{2 |\W|}\ k_{\W}(I_{\alpha_1},\dots,I_{\alpha_l})  
\end{equation}
exists and is finite for any choice of partition $\W$.

It is enough to show that the lemma holds true if each polynomial $P_i$ is a
monomial. Therefore it is enough to study the asymptotics of the expression
\begin{equation}
 \label{eq:ready-for-LS?}
 k\big( (I_{\alpha_1}\cdots I_{\alpha_{p_1}}),
(I_{\alpha_{p_1+1}} \cdots I_{\alpha_{p_1+p_2}}), \dots, (I_{\alpha_{p_1+\cdots+p_{l-1}+1}}\cdots
I_{\alpha_{p_1+\cdots+p_{l}}})\big). 
\end{equation}
We denote by 
\begin{multline*} 
\V:= \big\{ \{1,\dotsm,p_1\},\{p_1+1,\dots,p_1+p_2\},\dots,\\
\{p_1+\cdots+p_{l-1}+1,\dots,p_1+\cdots+p_l\} \big\}
 \end{multline*}
the corresponding partition.
From the formula of Leonov and Sirjaev \cite{LeonovSiraev} for classical cumulants it follows that
\eqref{eq:ready-for-LS?} is equal to 
\[ \sum_{\W : \V\vee\W=\mathbf{1} } k_{\W}(I_{\alpha_1},I_{\alpha_2},\dots). \]
Due to the combinatorial  inequality $|\V\vee\W|\leq |\V| + |\W|$, it
follows that for $\W$ which contribute to the sum, a bound $|\W| \geq l-1$ 
holds true, thus the assumption \eqref{eq:good-decay} finishes the proof.
\end{proof}

\subsection{Convergence in distribution in the sense of higher
order free probability}
\label{subsec:convergence-hops}

We say that a sequence $(\naturalMatrix)$ of random matrices \emph{converges in distribution
in the macroscopic sense of higher order free probability} if the limit
$M_{p_1,\dots,p_l}$ exists and is finite for any choice of integers
$p_1,\dots,p_l\geq 1$. Note that, in fact, this is a condition on the fluctuations of the (natural) spectral measures, provided that they exist.

We say that a sequence $(\naturalMatrix)$ of random matrices \emph{converges in distribution
in the microscopic sense of higher order free probability} if the limit
$K_{(\V,\pi)}$ exists and is finite for any choice of a partitioned permutation
$(\V,\pi)$.

With these notions, we can reformulate the results of this section. Theorem
\ref{theo:moments-cumulants-formula-higher-order} shows in particular that
the convergence in the microscopic sense implies the convergence in the
macroscopic sense while Theorem \ref{theo:inductive-step} shows that (under some
additional assumptions) the converse implication holds true as well. In
particular, in the case of classical random matrices (with commuting entries)
both notions are equivalent
and there is no need to make a distinction between them.

\pagebreak

\section{Representations and random matrices with non-commuting entries}
\label{sec:non-commuting-entries}

\subsection{Representation as a random matrix with non-commutative entries}
\label{subsec:representation-as-random-matrix}
We continue investigations from Section \ref{subsec:lie-representation-random-matrix-A} for the special case when $G=U(n)$ is the unitary group and $\g=\uu(n)$ its Lie algebra.

Under the notations from Section
\ref{subsec:matrix-structure-1},  we may view a representation
$\rho\in\uu(n)\gwia \otimes \End(V)$ as an $n\times n$ matrix with
entries in the non-commutative probability space $(\End(V),\trV)$ given
explicitly as a hermitian matrix
\begin{equation} 
\label{eq:reps-as-a-matrix}
\naturalMatrix(\rho) := 
\begin{bmatrix}
\rho(e_{11}) & \dots  & \rho(e_{1n}) \\
\vdots       & \ddots & \vdots       \\
\rho(e_{n1}) & \dots  & \rho(e_{nn}) 
\end{bmatrix} \in \M_n(\C) \otimes \End(V),
\end{equation}
where $e_{ij}\in \M_n(\C) = \uu(n) \otimes_\R \C$ are the matrix units.
We will use the notation $\naturalMatrix(\rho)$ in order to avoid ambiguities with other usages of the symbol $\rho$.

In the following, the normalized trace will enter in two distinct flavors: as the expected value $\trV$ in the non-commutative probability space \linebreak
$\big( \End(V), \trV \big)$, and as the normalized trace $\trn$ for matrices $\M_n(\C)$.

\subsection{Choice of the matrix structure on $\uu(n)\gwia$}
\label{subsec:matrix-structure}
Unlike in the case of the Lie algebra $\uu(n)$, there is no obvious
canonical choice of the matrix structure on its dual $\uu(n)\gwia$. In
Section \ref{subsec:matrix-structure-1}, this structure was
chosen based on a bilinear form $\langle A,B\rangle=\Tr A^T B$. One can argue
however, that a bilinear form $\langle A,B\rangle=\Tr A B$ would be equally
natural. This new way of choosing the matrix structure on $\uu(n)\gwia$
would have some advantages: for example the coadjoint action of $U(n)$ on it
corresponds to the usual adjoint action on $\M_n(\C)$ (without the somewhat
artificial complex conjugation). With respect to this new convention,
representation $\rho$ viewed as a matrix becomes
\begin{equation} 
\label{eq:reps-as-a-matrixA}
\begin{bmatrix}
\rho(e_{11}) & \dots  & \rho(e_{n1}) \\
\vdots       & \ddots & \vdots       \\
\rho(e_{1n}) & \dots  & \rho(e_{nn}) 
\end{bmatrix} \in \M_n(\C) \otimes \End(V).
\end{equation}
Matrices \eqref{eq:reps-as-a-matrix} and \eqref{eq:reps-as-a-matrixA} differ
just by transposition with respect to the first leg, also known as 
\emph{partial transpose}.
The advantage of the notation \eqref{eq:reps-as-a-matrix} is that it
coincides with the notation of \v{Z}elobenko \cite{Zelobenko73} which will be
useful later on in the calculation of the spectral measure. 

The calculation of the spectral measure of \eqref{eq:reps-as-a-matrixA} can be
done by the analogous methods to those of \v{Z}elobenko \cite{Zelobenko73};
the only difference is that instead of considering the tensor product with the
canonical representation, one should consider the tensor product with the
contragradient one; one obtains in this way a formula slightly different from the one from Proposition \ref{prop:estimate-zelobenko}.
This shows that, in fact, for the purposes of this article 
the two different definitions yield the same results.

\subsection{Representations of the unitary groups}
For any $\rank\geq 1$, we consider the following central element of the \emph{universal enveloping algebra} $\U(\uu(n))$:
\[ Z_{\rank}
=\sum_{1\leq i_1,\ldots ,i_\rank\leq n} 
 e_{i_1 i_2} e_{i_2 i_3} \cdots e_{i_\rank i_1}\in\U(\uu(n)).\]
We need the following result, due to {\v{Z}}elobenko \cite[Theorem 2,
p.~163]{Zelobenko73}. 
 
\begin{proposition}
\label{prop:estimate-zelobenko}
Let $\rho$ be an irreducible representation of\/ $\uu(n)$ corresponding
to the shifted highest weight $l=(l_1>\cdots>l_n)$. Then
\[ \rho(Z_\rank)=\sum_{1\leq i_1,\ldots ,i_\rank\leq n} 
 \rho(e_{i_1 i_2}) \rho(e_{i_2 i_3}) \cdots \rho(e_{i_\rank i_1})
 = \sum_{i=1}^n\gamma_i\ l_i^\rank,\]
where the number on the right-hand side should be understood as a multiple of
the identity operator and 
\[\gamma_i:= \prod_{j\neq i} \left( 1-\frac{1}{l_i-l_j} \right).\]
\end{proposition}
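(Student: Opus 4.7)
The plan is to apply Schur's lemma to reduce the claim to identifying a single scalar for each $k$, and then compute that scalar via a rational generating function whose analytic structure is dictated by the shifted weights $l_1,\dots,l_n$.

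First, I would show that $Z_k$ lies in the center of the universal enveloping algebra $\mathfrak{U}(\mathfrak{u}(n))$. Using the canonical commutation relations
\begin{equation*}
[e_{ab},e_{cd}] \;=\; \delta_{bc}\,e_{ad} \;-\; \delta_{ad}\,e_{cb},
\end{equation*}
one writes $[e_{ab},Z_k]$ as a sum of two families of boundary terms (one from each $\delta$-contraction); thanks to the cyclic structure of $e_{i_1 i_2}\cdots e_{i_k i_1}$, these families telescope and cancel after relabeling summation indices. Since $\rho$ is irreducible, Schur's lemma then yields $\rho(Z_k) = c_k(l)\,\operatorname{Id}$ for some scalar $c_k(l)$ depending only on the shifted highest weight.

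Second, I would identify $c_k(l)$ through a generating function. Form the formal series
\begin{equation*}
F(z) \;:=\; \sum_{k\geq 0} c_k(l)\,z^{-k-1}.
\end{equation*}
The central technical claim, which is the content of \v{Z}elobenko's Theorem~2, is that on the irreducible module $\rho_l$,
\begin{equation*}
F(z) \;=\; \frac{1}{n}\Bigl(1 \;-\; \prod_{i=1}^{n}\frac{z-l_i-1}{z-l_i}\Bigr).
\end{equation*}
The cleanest path to this identity is via the Capelli-type column determinant $\det_c\bigl(zI - E - D\bigr)$, where $E=(e_{ij})$ is the matrix of generators and $D=\operatorname{diag}(n-1,\dots,1,0)$; one shows that this noncommutative determinant is central, evaluates on $\rho_l$ as the polynomial $\prod_i(z-l_i)$, and that its logarithmic derivative (after normalization by $n$) coincides with $F(z)$. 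An alternative route is a direct computation of $Z_k v_\lambda$ on the highest-weight vector combined with PBW reorderings, but the Capelli-determinant approach avoids handling the many nonvanishing contributions coming from non-monotone index sequences $i_1,\dots,i_k$.

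Third, I would read off $c_k(l)$ by partial-fractions decomposition. The rational function $\prod_i\frac{z-l_i-1}{z-l_i}$ tends to $1$ as $z\to\infty$ and has simple poles at each $l_i$ with residue $-\prod_{j\neq i}(1-\frac{1}{l_i-l_j})$, so
\begin{equation*}
F(z) \;=\; \sum_{i=1}^{n}\frac{\gamma_i}{z-l_i}, \qquad \gamma_i \;=\; \frac{1}{n}\prod_{j\neq i}\Bigl(1-\frac{1}{l_i-l_j}\Bigr).
\end{equation*}
Expanding each $\frac{1}{z-l_i} = \sum_{k\geq 0} l_i^k\,z^{-k-1}$ as a geometric series and comparing coefficients of $z^{-k-1}$ yields $c_k(l) = \sum_i \gamma_i\,l_i^k$, as required.

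The main obstacle is the second step: pinning down the closed-form expression for $F(z)$. Evaluating $\rho(Z_k)v_\lambda$ head-on is combinatorially delicate because the cyclic indices $i_1,\dots,i_k$ are not constrained to be descending, so many raising-operator terms do not vanish on the highest-weight vector and must be normal-ordered via repeated application of the commutation relations. The Capelli/Harish-Chandra viewpoint bypasses this difficulty by packaging the entire family of scalars $\bigl(c_k(l)\bigr)_{k\geq 1}$ into a single meromorphic function of a spectral variable $z$, whose analytic structure (pole locations and residues, value at infinity) is transparent; once that identification is in hand, residue calculus delivers the explicit weights $\gamma_i$.
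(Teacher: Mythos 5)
The paper offers no proof of Proposition~\ref{prop:estimate-zelobenko}: the statement is simply cited from \v{Z}elobenko's book, so any detailed argument is already going beyond what the paper provides. Your scaffolding is largely sound: the centrality of the Gelfand invariants $Z_k$ via telescoping commutators, and the residue calculus in Step~3, are correct. Moreover, the closed form you assert for the generating function $F(z)$ is equivalent to the $(L+J)^k$ form of \v{Z}elobenko's result used later in Lemma~\ref{lem:it-is-friday}: writing $A=zI-L-J$ and $\mathbf{e}$ for the all-ones column vector, $A-\mathbf{e}\mathbf{e}^{T}$ is lower triangular with diagonal $z-l_i-1$, so the matrix determinant lemma gives $\mathbf{e}^{T}A^{-1}\mathbf{e}=1-\prod_i\tfrac{z-l_i-1}{z-l_i}$. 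The key Step~2, however, is still a citation to \v{Z}elobenko, so your proof ultimately rests on the same black box the paper uses.

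There are two concrete problems. First, the Capelli route you propose as ``the cleanest path'' does not work as stated: the logarithmic derivative of $C(z)=\prod_i(z-l_i)$ is $\sum_i\tfrac{1}{z-l_i}$, which has residue $1$ at every $l_i$ (or $\tfrac1n$ after dividing by $n$), whereas $F(z)$ must have residue $\gamma_i$, which depends on $i$. The correct relation is the \emph{discrete} quotient $F(z)=\tfrac1n\bigl(1-C(z-1)/C(z)\bigr)$ (equivalently, the matrix determinant lemma identity above), which is the Perelomov--Popov formula and is genuinely harder to derive from the Capelli determinant than a one-line log-derivative; if you actually tried to execute your sketch, it would fail at exactly this point. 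Second, a factor-of-$n$ issue, which you have inherited from the statement itself: for the defining representation of $U(2)$ one has $l=(2,0)$ and $\rho(Z_k)=n^{k-1}$, while $\sum_i\gamma_i l_i^k=2^{k-2}$, so the correct identity is $\rho(Z_k)=n\sum_i\gamma_i l_i^k$; equivalently, the $\tfrac1n$ should be removed from $\gamma_i$ in this Proposition (it belongs only in the spectral-measure formula, which is consistent with \eqref{eq:ecie-pecie-gdzie-jedziecie}). Your assertion $F(z)=\tfrac1n(1-\prod_i\cdots)$ therefore does \emph{not} equal the generating function of the scalars $c_k(l)=\rho(Z_k)$; it equals $\sum_k\tfrac{c_k(l)}{n}z^{-k-1}$.
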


\subsection{Natural spectral measure of a representation}
\label{subsec:spectral-measure}
We define the \emph{natural spectral measure} 
\[\spectralNatural_\rho:=\spectralNatural_{\naturalMatrix(\rho)}\] 
of the representation $\rho$ of Lie group $U(n)$ or of Lie algebra $\uu(n)$ as the spectral measure (in the sense of Section \ref{subsec:random-matrices-noncommutative}) of the random matrix $\naturalMatrix(\rho)$ with non-commutative entries.

Note that we already defined the \emph{\naive} spectral measure $\spectralNaiveShifted_\rho$ of a representation in Section \ref{subsec:specral-measure-definition}. The purpose of this section
is to compare these two non-equivalent definitions.

\begin{proposition}
If $\rho_l$ is the irreducible representation corresponding to the
shifted highest weight $l$, then its natural spectral measure is given by
the deterministic probability measure
\[ \spectralNatural_{\rho_l}=\spectralNatural_l=\sum_i \frac{\gamma_i}{n} \
\delta_{l_i} ,\]
where $\gamma_i$ was defined in Proposition \ref{prop:estimate-zelobenko}.
\end{proposition}
\begin{proof}
Firstly, observe that for an irreducible representation
$\rho=\rho_l:\uu(n)\rightarrow\End(V)$
\begin{equation}
\label{eq:ecie-pecie-gdzie-jedziecie}
  \int_\R z^\rank \ d\spectralNatural_{\rho}(z) \stackrel{\mathcal{D}}{=}\trn
[\naturalMatrix(\rho)]^\rank =
\frac{1}{n}\ \rho(Z_\rank)\in\End(V). 
\end{equation}
This equality should be understood as follows: the left-hand side is a classical random variable which has the same distribution (i.e., the same moments) as the right-hand side which is a non-commutative random variable in the non-commutative probability space $\big( \End(V), \trV \big)$. On the other hand $Z_\rank\in\C[\Sy{\rank}]$ is a central
element thus the right-hand side is a multiple of the identity operator which can be identified with the scalar
\begin{equation}
\label{eq:skalar}
 \trV \trn [Y(\rho)]^r = \trace_{n \dimm V} [Y(\rho)]^r,   
\end{equation}
where on right-hand side we view $Y(\rho)$ as a matrix of size $n \dimm V$.

Thus the right-hand side of \eqref{eq:ecie-pecie-gdzie-jedziecie} has the same distribution as a constant random variable.
Since a probability measure concentrated in a single point is uniquely determined by its moments, 
it follows that the left-hand side is a deterministic random variable as well.
In other words, $\mu_\rho$ is a random probability measure, the sequence of moments of which is given, almost surely, by \eqref{eq:skalar}. It remains to study the probability measures which have this sequence of moments.

An example of such a measure is just the spectral measure of the hermitian matrix $Y(\rho)\in \M_{n \dimm V}(\C)$. Since this measure is compactly supported, it is uniquely determined by its moments. Thus $\mu_\rho$ is a deterministic probability measure, equal to the spectral measure of $Y(\rho)\in \M_{n \dimm V}(\C)$. It remains to study the latter measure.

From Proposition \ref{prop:estimate-zelobenko} and
\eqref{eq:ecie-pecie-gdzie-jedziecie}
it follows that the (possibly signed) probability measure 
\begin{equation}
\label{eq:spectral-measure-incorrect}
 \spectralNatural_{\rho_l}' := \left(1 - \sum_i \frac{\gamma_i}{n} \right) \delta_0 +
\sum_i \frac{\gamma_i}{n}\ \delta_{l_i} 
\end{equation}
fulfills
\begin{equation}
\label{eq:equal-moments}
 \int P(x) \ d\spectralNatural'_{\rho_l} = \trn P[\naturalMatrix(\rho)] 
\end{equation}
for every polynomial $P$. The above equality should be understood as follows: the left-hand side is a real number $x\in\R$,
 while the right-hand side is the appropriate multiple of identity $x \mathbf{1}_V$. 
Since both $\spectralNatural'_{\rho_l}$ given
by \eqref{eq:spectral-measure-incorrect} and the spectral measure of
$\rho_l$ are finitely supported, it follows immediately from
\eqref{eq:equal-moments} that they are equal. 

It remains to show that $\sum_i \gamma_i=n$, hence the first summand in
\eqref{eq:spectral-measure-incorrect} vanishes. This can be done by a careful
analysis of the proof of \v{Z}elobenko \cite{Zelobenko73}; we provide an
alternative proof below.

For $l=(l_1,\dots,l_n)$ and any integer $s$ we denote
$l+s=(l_1+s,\dots,l_n+s)$. Notice that
the
irreducible representation of the unitary group $\rho_{l+s}$ can be
explicitly written as $\rho_{l+s}(U) = (\det U)^{s} \rho_l(U)$ for
any $U\in U(n)$ hence the corresponding representation of the Lie algebra
fulfills $\rho_{l+s}(x) = s\Tr x\cdot 1+ \rho_l(x)$ for any $x\in
u(n)$. Therefore, if we view $\rho_l$ and $\rho_{l+s}$ as random
matrices with non-commutative entries then
\[ \rho_{l+s} = s 1 + \rho_l. \]
It follows that the spectral measure of $\rho_{l+s}$ is just the spectral
measure of $\rho_l$ shifted by $s$; on the other hand the measure
$\rho_{l+s}$ given by \eqref{eq:spectral-measure-incorrect} is equal to
the shifted measure $\rho_l$ only if $\sum_i \gamma_i=n$.
\end{proof}

In the case when the representation $\rho$ is not irreducible, its natural spectral
measure is a random probability measure on the real line which can be
interpreted as the natural spectral measure of a random irreducible representation
$\rho_l$ distributed according to \eqref{eq:probability-on-weights}.

It becomes clear that the \naive definition of the spectral measure
$\spectralNaiveShifted$ and the natural definition of the spectral measure
$\spectralNatural$ do not coincide. Nevertheless the following lemma shows that
they coincide asymptotically (under some mild technical assumptions).

\begin{lemma}
\label{lem:it-is-friday}
For each $\rank\geq 1$ there exist polynomials $P_\rank$ and $Q_\rank$ in $\rank+1$ variables such
that for any shifted highest weight $l$
\begin{align}
\label{eq:kuraki} 
\momentNumber_\rank (\spectralNaiveShifted_l) = &
P_\rank\big(n,  \momentNumber_1(\spectralNatural_l), \dots,
\momentNumber_{\rank}(\spectralNatural_l) \big), \\
\nonumber
\momentNumber_\rank (\spectralNatural_l) = &
Q_k\big(n,  \momentNumber_1(\spectralNaiveShifted_l), \dots,
\momentNumber_{\rank}(\spectralNaiveShifted_l) \big),
\end{align}
where 
\[\momentNumber_\rank(\mu)=\int_\R z^\rank \ d\mu(z)\]
denotes the $\rank$-th moment of a given measure $\mu$.

We define a degree on polynomials by assigning the degree $1$ to the variable $n$
and the degree $i$ to the variables
$\momentNumber_i(\spectralNatural_l)$ and $\momentNumber_i(\spectralNaiveShifted_l)$. 
Then the polynomials $P_\rank$ and $Q_\rank$ have degree $\rank$ and their leading terms are given by
\[ \momentNumber_\rank (\spectralNatural_l) + (\text{terms of degree $\rank$
which contain at least one factor $n$})\]
and
\[ \momentNumber_\rank (\spectralNaiveShifted_l) + (\text{terms of degree $\rank$
which contain at least one factor $n$})\]
respectively.
\end{lemma}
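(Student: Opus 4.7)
The plan is to pass to Cauchy transforms, where both spectral measures admit a clean rational description. Setting $p(z) = \prod_{j=1}^{n}(z-l_j)$, the explicit formula for $\gamma_i$ from the preceding proposition identifies $-n\gamma_i$ as the residue of $p(z-1)/p(z)$ at $z=l_i$; since $p(z-1)/p(z) \to 1$ as $z\to\infty$, partial fractions together with the obvious $\sum_i (z-l_i)^{-1} = p'(z)/p(z)$ give
\[
n\,G_{\spectralNaiveShifted_l}(z) = \frac{p'(z)}{p(z)},
\qquad
1 - n\,G_{\spectralNatural_l}(z) = \frac{p(z-1)}{p(z)},
\]
where $G_\mu(z) = \int(z-x)^{-1}\,d\mu(x)$ denotes the Cauchy transform. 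Using $(\log p)' = p'/p$ together with $\log p(z) - \log p(z-1) = \int_{z-1}^{z} (p'/p)(w)\,dw$ to eliminate the $l_j$'s, these two identities combine into the single identity of formal power series in $z^{-1}$:
\[
1 - n\,G_{\spectralNatural_l}(z) = \exp\!\Bigl(-n\int_{z-1}^{z} G_{\spectralNaiveShifted_l}(w)\,dw\Bigr).
\]
This is the engine that produces both polynomials.

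To produce $Q_k$, I would expand both sides as Laurent series in $1/z$. The series $G_{\spectralNaiveShifted_l}(w) = \sum_{k \geq 0}\momentNumber_k(\spectralNaiveShifted_l)\, w^{-k-1}$ is integrated termwise using the binomial expansion of $(z-1)^{-k}$, so the exponent becomes a power series whose coefficient of $z^{-N}$ is $-n$ times an explicit linear combination of $\momentNumber_0(\spectralNaiveShifted_l), \dots, \momentNumber_{N-1}(\spectralNaiveShifted_l)$. Exponentiation is a purely polynomial operation on such coefficients, and reading off the coefficient of $z^{-(k+1)}$ on both sides expresses $\momentNumber_k(\spectralNatural_l)$ as a polynomial in $n$ and the naive moments up to order $k$ (with $\momentNumber_0 = 1$ absorbed as a numerical constant). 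For $P_k$, take the logarithm of the same identity and differentiate in $z$: the right-hand side becomes the discrete difference $-n[G_{\spectralNaiveShifted_l}(z) - G_{\spectralNaiveShifted_l}(z-1)]$, while the left-hand side is algebraic in $G_{\spectralNatural_l}$ and $G_{\spectralNatural_l}'$. The coefficient of $z^{-N}$ on the difference side is a linear combination of $\momentNumber_0(\spectralNaiveShifted_l), \dots, \momentNumber_{N-2}(\spectralNaiveShifted_l)$ in which the highest moment appears with the nonzero coefficient $-(N-1)$, so one can solve inductively for each $\momentNumber_k(\spectralNaiveShifted_l)$ as a polynomial in $n$ and the moments of $\spectralNatural_l$.

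The degree and leading-term assertion is a bookkeeping step. With the grading $\deg n = 1$ and $\deg \momentNumber_j = j$, the coefficient of $z^{-N}$ in $\int_{z-1}^{z}G_{\spectralNaiveShifted_l}\,dw$ has degree at most $N-1$; multiplying by $n$ brings it up to $N$, and in the $s$-fold product appearing in the Taylor expansion of $\exp$ the factors contribute degrees summing to at most $N$, so the coefficient of $z^{-(k+1)}$ in the expanded exponential has degree at most $k+1$. After dividing by $n$ one sees that each moment expression has degree exactly $k$. The single monomial of degree $k$ with no factor of $n$ comes uniquely from the linear ($s=1$) term in the exponential, after cancellation with the outer prefactor $1/n$; its top piece is $\momentNumber_k(\spectralNaiveShifted_l)$ (respectively $\momentNumber_k(\spectralNatural_l)$ in the reverse direction), while every other degree-$k$ monomial inherits at least one factor of $n$ from the $s \geq 2$ terms.

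The main technical hurdle is to verify that the factor $1/n$ hidden in the normalization $G_\mu = \frac{1}{n}(\cdots)$ is always cancelled by a matching $n$ inside the exponent, so that the resulting expressions are honest polynomials rather than rational functions of $n$. Once this cancellation is checked, the polynomial nature of $P_k$ and $Q_k$ is automatic and the leading-term description follows from the combinatorial count above.
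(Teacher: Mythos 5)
Your argument is correct, and it takes a genuinely different route from the paper. Where the paper expands $n\,\momentNumber_k(\spectralNatural_l)=\sum_{i,j}\big[(L+J)^k\big]_{ij}$ directly into monomial symmetric polynomials of $l_1,\dots,l_n$ (using the matrix identity from \v{Z}elobenko) and then rewrites in power sums to read off degree and leading term from a filtration, you repackage the same data into Cauchy transforms. Your residue computation verifying $\operatorname{Res}_{z=l_i} p(z-1)/p(z)=-n\gamma_i$ from the explicit product formula for $\gamma_i$ is right, and the resulting identity
\[
1-n\,G_{\spectralNatural_l}(z)=\frac{p(z-1)}{p(z)}=\exp\!\Bigl(-n\!\int_{z-1}^{z}G_{\spectralNaiveShifted_l}(w)\,dw\Bigr)
\]
is a clean generating-function version of what the paper does term by term. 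The $1/n$ cancellation you worried about is indeed automatic: the coefficient of $z^{-(k+1)}$ in $\exp(E)$ is a sum over $s\geq 1$ of contributions each divisible by $n^{s}$, so after equating with $-n\,\momentNumber_k(\spectralNatural_l)$ and dividing by $-n$ one gets a genuine polynomial in $n$ in which only the $s=1$ term is $n$-free, precisely matching the asserted leading-term shape. Your log-and-differentiate argument for the inverse direction works too, since the coefficient of $\momentNumber_{N-2}(\spectralNaiveShifted_l)$ in $[z^{-N}]\big(G(z)-G(z-1)\big)$ is $-(N-1)\neq0$, making the system triangular. What the paper's route buys is that it stays entirely within symmetric-function manipulations and uses \v{Z}elobenko's formula in the form the rest of the paper needs; what your route buys is a compact closed identity that also makes the free-probabilistic meaning of the two spectral measures (a Markov--Krein-type transform) visually transparent. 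One small caveat: both your version and the paper's actual proof produce an expression of $\momentNumber_k$ of one measure in terms of $n$ and the moments of the other measure up through order $k$ (not $k-1$), so the ``$k$ variables'' count in the statement should really be $k+1$; this is a slip in the statement rather than in either proof.
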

\begin{proof}
We denote
\[ L= 
\begin{bmatrix}
l_1  &  &  &  &  \\
  & l_2 &  &  &  \\
  &  & l_3 &  &  \\
  &  &  & \ddots &  \\
  &  &  &  & l_n
\end{bmatrix}, \qquad
J= 
\begin{bmatrix}
0 & -1 & -1 & \cdots  & -1     \\
  & 0  & -1 & \cdots  & -1     \\
  &    & 0  & \ddots  & \vdots \\
  &    &    & \ddots  & -1      \\
  &    &    &         & 0
\end{bmatrix}.\]

The (rescaled) moments of the spectral measure $\spectralNatural_l$ are given in
\cite{Zelobenko73}: 
\begin{equation}
\label{eq:zmiana-bazy}
\begin{split}
n\ \momentNumber_\rank(\spectralNatural_l)
= & \sum_{1\leq i,j\leq n} \big[ (L+J)^\rank \big]_{ij}\\
= & \sum_{\substack{\alpha_1,\dots,\alpha_q\geq 0, \\
\alpha_1+\cdots+\alpha_q+q-1=\rank }} \sum_{1\leq i,j\leq n} \big[ L^{\alpha_1} J
L^{\alpha_2} \cdots J L^{\alpha_{q}} \big]_{ij}  \\
= & \sum_{\substack{\alpha_1,\dots,\alpha_q\geq 0, \\
\alpha_1+\cdots+\alpha_q+q-1=\rank }} \sum_{1\leq i_1<\cdots<i_q\leq n}
(-1)^{q-1}\ l_{i_1}^{\alpha_1} \cdots l_{i_q}^{\alpha_q} \\
= & \sum_{\substack{\alpha_1\geq \cdots\geq \alpha_q\geq 0, \\
\alpha_1+\cdots+\alpha_q+q-1=\rank }} 
(-1)^{q-1}\ m_{(\alpha)}(l_1,\dots,l_n),
\end{split}
\end{equation}
where $m_{(\alpha)}$ denotes
the monomial symmetric polynomial. We allow here a small abuse of notation,
namely we allow some of the elements of $(\alpha)=(\alpha_1,\dots,\alpha_q)$ to be equal to zero; this
does not lead to problems since we treat $m_{(\alpha)}$ not as a symmetric
function but as a polynomial in a finite, fixed number of variables.

It is easy to check that by assigning to the expression $m_{(\alpha)}$ the
degree $(\alpha_1+1)+\cdots+(\alpha_q+1)$ one gets a filtration on the algebra generated by symmetric functions $(m_{(\gamma)})$; in other words any product $m_{(\alpha)} m_{(\beta)}$ can be written as a linear combination of monomial symmetric polynomials $m_{(\gamma)}$ such that $\degg m_{(\gamma)} \leq \degg m_{(\alpha)} + \degg m_{(\beta)}$. Furthermore,
\begin{multline}
\label{eq:power-sum-and-monomial}
p_{\alpha_1} p_{\alpha_2} \cdots = m_{(\alpha)} + \\
\left(\text{linear combination of $(m_{(\gamma)})$ such that $\degg m_{(\gamma)}< \degg m_{(\alpha)}$ }\right),
\end{multline}
where 
\[p_i(l_1,\dots,l_n)=l_1^i+\cdots+l_n^i\] 
for $i\geq 0$ are the power-sum symmetric polynomials.
The system of equations \eqref{eq:power-sum-and-monomial} is upper-triangular; it follows that for each $\alpha$ there exists some polynomial such that
\begin{multline*} 
m_{(\alpha)} = p_{\alpha_1} p_{\alpha_2} \cdots + \\
\left( \text{polynomial in $p_0,p_1,\dots$ of degree at most $\degg m_{(\alpha)}-1$}\right). 
\end{multline*}

The existence of the polynomial $Q_k$ follows now from \eqref{eq:zmiana-bazy} and the observation that
\begin{align*}
p_0(l_1,\dots,l_n)= & n\ \momentNumber_0(\spectralNaiveShifted)=n, \\
p_i(l_1,\dots,l_n)= & n\ \momentNumber_i(\spectralNaiveShifted)\qquad\qquad
\text{for $i\geq 1$}. 
\end{align*}
The passage
from quantities $(p_i)$ to $n$ and
$\big(\momentNumber_i(\spectralNaiveShifted)\big)$
corresponds to assigning to variable $n$ degree $1$ and to
$\momentNumber_i(\spectralNaiveShifted)$ degree $i$ which coincides with the
choice of degrees in the formulation of the lemma. The proof of the required
properties of the polynomial $(Q_k)$ is finished by the observation that the
right-hand side of \eqref{eq:zmiana-bazy} has degree $k+1$
and in order to have the minimal possible exponent standing at $n$ one should take
only the unique summand corresponding to $q=1$.

The family of equations \eqref{eq:kuraki} can be solved with
$\big(\momentNumber_k(\spectralNatural)\big)$ as unknowns which shows existence
of polynomials $(P_k)$ and their required properties.
\end{proof}

\subsection{Proof of the main result}

We come to the main result of this paper, Theorem
\ref{theo:main_theorem}, which we state in the following, more precise form.
We recall that various forms of convergence in the sense of higher order free probability have been defined in Section \ref{subsec:convergence-hops}.

\begin{theorem}
\label{theo:main_theorem-RELOADED}
For each $n$, let $\rho_n$ be a representation of the unitary group $U(n)$ and
assume that $\varepsilon_n=o\left(\frac{1}{n}\right)$. The
following conditions are equivalent:
\begin{enumerate}[label=(\alph*)]
 \item \label{item:macro}
the sequence 
$(\varepsilon_n \naturalMatrix(\rho_n))$ of natural random matrices with
non-com\-mu\-ta\-tive entries converges in distribution in the \emph{macroscopic sense of
higher order free probability},
 \item \label{item:micro}
the sequence 
$(\varepsilon_n \naturalMatrix(\rho_n))$ of natural random matrices with
non-com\-mu\-ta\-tive entries converges in distribution in the \emph{microscopic sense of
higher order free probability},
 \item \label{item:matrix}
the sequence $\big(\varepsilon_n
\naiveMatrix(\rho_n)\big)$ of \naive random matrices  converges in distribution in the \emph{sense of higher order free
probability}. 
\end{enumerate}
If the limits exist, they are equal (i.e., they describe the same
limiting object in the sense of higher order free probability) and, furthermore,
the limit $K_{(\V,\pi)}$ in \ref{item:micro} depends only on the conjugacy
class of $(\V,\pi)$. 
\end{theorem}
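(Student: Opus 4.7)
The strategy is to prove the chain of implications (b) $\Rightarrow$ (a) $\Leftrightarrow$ (c) and to close the circle via (a) $\Rightarrow$ (b). The first step is immediate: by Lemma \ref{lem:invariance-of-moments}, the non-commutative random matrix $\varepsilon_n \rho_n$ defined through \eqref{eq:reps-as-a-matrix} is unitarily invariant, so Theorem \ref{theo:moments-cumulants-formula-higher-order} reads off the macroscopic limits $M_{p_1,\dots,p_l}$ from the microscopic cumulants $K_{(\V,\pi)}$ via formula \eqref{eq:moments-cumulants-free}.

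For the equivalence (a) $\Leftrightarrow$ (c), the macroscopic observables of $\varepsilon_n \rho_n$ and of the classical matrix $\varepsilon_n X(\rho_n)$ are cumulants of traces, encoded respectively in the moments of the two spectral measures $\spectralNatural$ and $\spectralNaiveShifted$. Lemma \ref{lem:it-is-friday} relates these families of moments by polynomials whose gradation assigns degree $1$ to the variable $n$; after rescaling by $\varepsilon_n$, every off-diagonal monomial picks up a factor $(n\varepsilon_n)^a$ with $a \geq 1$, which vanishes because $\varepsilon_n = o(1/n)$. Lemma \ref{lem:decay-of-products} then transfers the equality of moment limits to an equality of cumulant limits, establishing the equivalence of macroscopic convergence. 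Since for a classical random matrix the microscopic and macroscopic notions coincide, this is enough for the full equivalence with (c).

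The main work lies in (a) $\Rightarrow$ (b), which I would prove by induction on the degree $m_0$ using Theorem \ref{theo:inductive-step}. The critical prerequisite is that $(\varepsilon_n \rho_n)$ has asymptotically vanishing commutators to every degree. This is where the Lie-algebra structure plays its role: the commutation relation of matrix units yields
\begin{equation*}
[\varepsilon_n \rho_n(e_{ab}),\, \varepsilon_n \rho_n(e_{cd})] = \varepsilon_n \cdot \varepsilon_n \rho_n\bigl(\delta_{bc} e_{ad} - \delta_{da} e_{cb}\bigr),
\end{equation*}
so that any commutator of two entries of $\varepsilon_n \rho_n$ is $\varepsilon_n$ times a linear combination of individual entries of $\varepsilon_n \rho_n$. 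Plugging this into \eqref{eq:vanishing-commutators} converts a length-$m$ cumulant containing a commutator into $\varepsilon_n$ times a length-$(m-1)$ cumulant of ordinary entries; by Proposition \ref{prop:7} and the inductive microscopic bound this is $O\bigl(n^{-|(\V',\pi')|}\bigr)$ for the reduced partitioned permutation $(\V',\pi')$. Combined with $\varepsilon_n = o(1/n)$, this yields the $o\bigl(n^{-|(\V,\pi)|}\bigr)$ bound from the definition, and Theorem \ref{theo:inductive-step} propagates microscopic convergence one degree at a time; the conjugacy invariance of $K_{(\V,\pi)}$ then follows from Lemma \ref{lem:almost-commutes}.

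The hard part will be the combinatorial accounting inside the commutator reduction: one must check that the partitioned permutation $(\V',\pi')$ produced when the commutator is applied and the indices are relabeled always satisfies $|(\V,\pi)| \leq |(\V',\pi')| + 1$, so that the single extra factor $\varepsilon_n = o(1/n)$ really suffices to restore the required decay rate. This is precisely the point at which the hypothesis must be $\varepsilon_n = o(1/n)$ and not merely $\varepsilon_n \to 0$.
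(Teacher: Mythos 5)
Your proof follows the same architecture as the paper's: (b)$\Rightarrow$(a) via Theorem \ref{theo:moments-cumulants-formula-higher-order}, (a)$\Leftrightarrow$(c) via Lemmas \ref{lem:it-is-friday} and \ref{lem:decay-of-products}, and (a)$\Rightarrow$(b) by inductively verifying asymptotically vanishing commutators (using $[e_{ab},e_{cd}]=\delta_{bc}e_{ad}-\delta_{da}e_{cb}$) and invoking Theorem \ref{theo:inductive-step}, which is exactly the chain the paper uses; you merely supply the commutator computation that the paper leaves implicit. The length bookkeeping you flag as the hard part in fact resolves to an exact equality, $|(\V',\pi')|=|(\V,\pi)|-1$, since merging $i$ and $i+1$ (which lie in one block of $\V$ and one cycle of $\pi$) decreases $\len$ by one while leaving $\#\pi$ and $\#\V$ unchanged, so the single factor $\varepsilon_n=o(1/n)$ is precisely what is needed.
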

The fact that the limit $K_{(\V,\pi)}$ in \ref{item:micro} depends only on the conjugacy
class of $(\V,\pi)$ can be informally interpreted as asymptotic
commutativity of the entries of the matrices.

Condition \ref{item:macro} can be reformulated as a statement about fluctuations of the random probability measures $\mu_{\varepsilon_n \rho_n}$ (the natural spectral measures) while Condition \ref{item:matrix} can be reformulated as an analogous statement about the fluctuations of the \naive spectral measures $\spectralNaiveShifted_{\varepsilon_n \rho_n}$.

\begin{proof}
Assume that Condition \ref{item:macro} holds true. We will use induction over
$\len_0$ in order to prove \ref{item:micro}: assume that the limit
\eqref{eq:higher-order-cumulants-as-limits} exists for all
partitioned permutations $(\V,\pi)$ of the set $[\len]$ for all
$\len<\len_0$.
For $\naturalMatrix:=\varepsilon \naturalMatrix(\rho)$, we can write down explicitly the form of the
commutator on the left hand side of \eqref{eq:vanishing-commutators}:
\begin{multline*} 
[\naturalMatrix_{i,
\pi(i)} ,\naturalMatrix_{i+1, \pi(i+1)}]=[\varepsilon \rho(e_{i,\pi(i)}) , \varepsilon \rho(e_{i+1, \pi(i+1)})]
= \\ \varepsilon \times \Big( [\pi(i)=i+1]\ \varepsilon \rho(e_{i,\pi(i+1)})  -
[\pi(i+1)=i]\ \varepsilon \rho(e_{i+1,\pi(i)}) \Big)= \\
o\left(\frac{1}{n}\right) \times \Big(
[\pi(i)=i+1]\ \naturalMatrix_{i,\pi(i+1)}  -
[\pi(i+1)=i]\ \naturalMatrix_{i+1,\pi(i)}) \Big)
; 
\end{multline*}
thus the left-hand side of \eqref{eq:vanishing-commutators} is a linear combination of (at most) two expressions of the form
\begin{equation}
\label{eq:komutator-szacowanie}
 o\left(\frac{1}{n}\right) \times k_{\V'} \big( \naturalMatrix_{1  \sigma(1)}, 
\dots, \naturalMatrix_{\rank \sigma(\len-1) }\big) 
 \end{equation}
for some permutations $\sigma$ of the set $\{1,\dots,\len-1\}$. 
Note that a priori such a statement would require renumbering of the rows and columns of the matrix $\naturalMatrix$. 
But due to the unitary invariance of $\naturalMatrix$, renumbering does not change the joint distribution of the entries of the matrix 
$\naturalMatrix$.
The inductive assumption about the limits \eqref{eq:higher-order-cumulants-as-limits} gives a bound for the 
quantity \eqref{eq:komutator-szacowanie}.
One can check that the permutation $\sigma$ has the same number of cycles as $\pi$. Thus, $ |(\V',\sigma)|=|(\V,\pi)|-1$.
This implies that
the sequence $(\naturalMatrix)$ has asymptotically vanishing commutators up to
the order $\len_0$. Therefore, Theorem  \ref{theo:inductive-step} can be applied
and the limit \eqref{eq:higher-order-cumulants-as-limits} exists for all
partitioned permutations $(\V,\pi)$ of the set $[\len]$ for all
$\len\leq \len_0$, thus we finished the proof of the induction step. In this way we proved Condition \ref{item:micro}. 

The opposite implication \ref{item:micro}$\implies$\ref{item:macro} follows directly from Theorem
\ref{theo:moments-cumulants-formula-higher-order}.

In order to show the implication \ref{item:matrix}$\implies$\ref{item:macro},  we
need to show that the cumulant
\begin{equation}
 \label{eq:hungry}
  k_l\big(\varepsilon^{\rank_1}
\momentNumber_{\rank_1}(\spectralNatural_{\rho}),\dots,
\varepsilon^{\rank_l} \momentNumber_{\rank_l}(\spectralNatural_{\rho})\big)
\end{equation}
converges sufficiently quickly  to zero. In order to do this, we use Lemma
\ref{lem:it-is-friday} and express \eqref{eq:hungry} in terms of the cumulants
of polynomials in $\big(\momentNumber_i(\spectralNaiveShifted_\rho)\big)$. Lemma
\ref{lem:decay-of-products} finishes the proof. 

The opposite implication \ref{item:macro}$\implies$\ref{item:matrix} can be
proved in an analogous way.

Finally, the equality of the limiting objects is obvious. This completes the proof.
\end{proof}

\section{Applications to asymptotic representation theory}
\label{sec:applications}

In this section we provide some concrete applications of the main result
\ref{theo:main_theorem-RELOADED}.
In particular, we elaborate on Section
\ref{subsec:concrete-applications}
and supply proofs of the results announced in the introduction.

\subsection{Gaussianity of fluctuations for Kronecker tensor product}

\begin{proof}[Proof of Corollary \ref{corollary:Kronecker-RM}]
We will show that for every $i\in\{1,2\}$ the
sequence $(\rho^{(i)}_n)$ of representations fulfills condition \ref{item:matrix} of Theorem \ref{theo:main_theorem-RELOADED}. 
Indeed, the convergence of the sequence $\left( \varepsilon_n \naiveMatrix(\rho_n) \right)$ in the macroscopic sense of higher order free probability is, by definition, equivalent to existence of the limits \eqref{eq:moments-cumulants-free}.
For $l=1$, this limit is just
\[ M_{p} = \lim_{n\to\infty} \E \trn \left[\varepsilon_n \naiveMatrix(\rho_n^{(i)})\right]^p.\]
The existence of this limit is equivalent to the assumption that the \naive 
spectral measures $\left(\spectralNaiveShifted_{\varepsilon_n \rho^{(i)}_n}\right)_{n=1,2,\dots}$  converge in moments to some limit.
For $l\geq 2$ the cumulants
\[ k_l\left(\trn [\naiveMatrix(\rho_n^{(i)})]^{p_1},\dots, \trn [\naiveMatrix(\rho_n^{(i)})]^{p_l}\right)=0 \]
vanish, because each random variable $\trn [\naiveMatrix(\rho_n^{(i)})]^{p_k}$ is, in fact, a constant one; thus the limit $M_{p_1,\dots,p_l}=0$ exists trivially.

Since the
sequence $(\rho^{(i)}_n)$ of representations fulfills condition \ref{item:matrix} of Theorem \ref{theo:main_theorem-RELOADED},
it follows that it also fulfills condition \ref{item:micro}:  the sequence $\left(\varepsilon_n
\naturalMatrix(\rho^{(i)}_n)\right)$ converges in the microscopic sense of higher order
probability theory and that this microscopic limit is the same as for the
sequence of random matrices $\left(\varepsilon_n \naiveMatrix(\rho^{(i)}_n)\right)$.

We denote by $\rho^{(3)}_n:=\rho^{(1)}_n \otimes \rho^{(2)}_n$ the Kronecker tensor product of representations.
For Lie algebras representations $\rho^{(i)}:\uu(n)\rightarrow\End
V^{(i)}$, $i\in\{1,2\}$, it follows that  
\[\rho^{(3)}_n(x) = \rho^{(1)}_n(x)\otimes 1 + 1\otimes \rho^{(2)}_n(x)
\in \End(V^{(1)} \otimes V^{(2)}) \]
for any $x\in\uu(n)$, hence
\begin{equation}
\label{eq:aaaa}
\varepsilon_n \naturalMatrix(\rho^{(3)}) = \varepsilon_n \naturalMatrix(\rho^{(1)})\otimes 1 + 1\otimes \varepsilon_n \naturalMatrix(\rho^{(2)})
\in \M_n(\C)\otimes \End(V^{(1)} \otimes V^{(2)}). 
\end{equation}

On the other hand, if $\naiveMatrix^{(i)}\in \M_n(\C) \otimes \mathcal{L}^{\infty-}(\Omega^{(i)})$, 
$i\in\{1,2\}$ are random matrices, the sum of their independent
copies can be realized on the product probability space $\Omega^{(1)}\times
\Omega^{(2)}$ as 
\begin{equation}
\label{eq:aaaa2} 
 \widetilde{\naiveMatrix}^{(3)}:= \naiveMatrix^{(1)}\otimes 1 + 1\otimes \naiveMatrix^{(2)}
\in \M_n(\C) \otimes \mathcal{L}^{\infty-}(\Omega^{(1)}\times \Omega^{(2)}). 
\end{equation}

Each of the expressions \eqref{eq:aaaa} and \eqref{eq:aaaa2} is a sum of two
(non-commutative) random vectors in $\M_n(\C)$  which have tensor
independent coordinates; each of these summand converges in the microscopic sense of higher order free probability and the limits of the first (respectively, second) summands are equal. By additivity of cumulants, 
it follows immediately that also $\varepsilon_n
\naturalMatrix(\rho^{(3)}_n)$ and $\widetilde{\naiveMatrix}^{(3)}$ converge in the microscopic sense of higher
order free probability theory and that the limits are equal. 

We apply Theorem \ref{theo:main_theorem-RELOADED} again and show that
$\varepsilon_n \naturalMatrix(\rho^{(3)}_n)$, $\naiveMatrix^{(3)}$ and $\widetilde{\naiveMatrix}^{(3)}$ converge in the
macroscopic sense of higher order free probability theory and that their limits
are equal.

For a sequence $(\naiveMatrix_n)$ of random matrices, the convergence in the macroscopic
sense of higher order free probability theory is equivalent to existence of the
limits \eqref{eq:moments-cumulants-free} and implies in particular that that the limits
\begin{align}
\label{eq:mean}
 M_l          = & \lim_{n\to\infty} \E \trn \naiveMatrix^l, \\
\label{eq:kowariancja}
 M_{l_1,l_2} = &\lim_{n\to\infty} \Cov\left( n\trn \naiveMatrix^{l_1},n \trn \naiveMatrix^{l_2}
\right), \\
\nonumber
 &  \lim_{n\to\infty} 
k_i\left( n\trn \naiveMatrix^{l_1},\dots, n \trn \naiveMatrix^{l_i} \right) =0 \qquad \text{for $i\geq
3$} 
\end{align}
exist. In classical probability theory, vanishing of the cumulants (other than the mean value and variance) characterizes the Gaussian distribution;
it shows that the spectral measure of $\naiveMatrix_n$ has
asymptotically Gaussian fluctuations with covariance decay $\frac{1}{n^2}$ which finishes the proof that the spectral
measures (both the \naive and the natural ones) have the same Gaussian
fluctuations as random matrices $\widetilde{\naiveMatrix}^{(3)}$.
\end{proof}

\subsection{Almost surely convergence}

\begin{proof}[Proof of Corollary \ref{corollary:Kronecker-free}]
For a sequence $(\naiveMatrix)$ of random matrices which converges in the macroscopic
sense of higher order free probability,
Equation \eqref{eq:kowariancja} shows that for every value of $l\geq 1$
\[ \Var \trn \naiveMatrix^l = O\left( \frac{1}{n^2} \right) \]
so Chebyshev's inequality together with Borel-Cantelli lemma show that 
$ \trn \naiveMatrix^l $ converges to \eqref{eq:mean} almost surely.

Since the spectral measure of the sum of independent random matrices
concentrates around Voiculescu's free convolution of their spectral measures
\cite{Voiculescu1991}, the results presented in the above proof of Corollary
\ref{corollary:Kronecker-RM} finish the proof.
\end{proof}

\subsection{Restriction to the subgroup}

\begin{proof}[Proof of Corollary \ref{coro:restriction}]
Just like in the proof of  Corollary \ref{corollary:Kronecker-RM} above,
we show that  
$\varepsilon_n \naiveMatrix(\rho_n)$ converges in the macroscopic sense of higher order free probability theory thus
$\varepsilon_n \naturalMatrix(\rho_n)$ converges in the microscopic sense of higher order free probability theory.

It follows immediately that
\begin{multline} 
\label{eq:restrictionA}
K_{(\V,\pi)}(\rho'):= \lim_{n\to\infty} m_n^{|(\V,\pi)|} \kappa_{(\V,\pi)} = \\
\alpha^{|(\V,\pi)|} \lim_{n\to\infty} n^{|(\V,\pi)|} \kappa_{(\V,\pi)}=
\alpha^{|(\V,\pi)|}  K_{(\V,\pi)}(\rho);
\end{multline}
in particular $\varepsilon_n \naturalMatrix(\rho'_n)$ converges in the microscopic sense of higher order free probability theory.

If we define $\naiveMatrix_n'$ as the $m_n\times m_n$ upper-left corner of the random matrix $\naiveMatrix_n$, an analogous calculation shows that
\begin{equation}
\label{eq:restrictionB}
 K_{(\V,\pi)}(\naiveMatrix')=
\alpha^{|(\V,\pi)|}  K_{(\V,\pi)}(\naiveMatrix);  
\end{equation}
as the right-hand sides of \eqref{eq:restrictionA} and \eqref{eq:restrictionB} are equal, so must be their left-hand sides. 

In an analogous way as in the proof of Corollary \ref{corollary:Kronecker-RM} it follows that the rescaled \naive spectral measure $\spectralNaiveShifted_{\varepsilon_n\rho'_n}$ of the restricted representation
and the spectral measure of the $m_n\times
m_n$ upper-left corner of the random matrix $\naiveMatrix_n$ have asymptotically 
the same Gaussian fluctuations with covariance decay $\frac{1}{n^2}$.

In an analogous way as in the proof of Corollary \ref{corollary:Kronecker-free}
one can show that the (rescaled) \naive spectral measures of $\rho'$ and the spectral measures of $X'$ converge almost surely to the same limit. On the other hand it is well-known that the spectral measures of $X'$ converge to the free compression of the measure $\mu$, which finishes the proof.
\end{proof}

\section{Acknowledgments} \label{sec:Acknowl}

Both authors thank J.~Mingo and R.~Speicher for many useful discussions about 
higher order free probability theory.

The research of P.\'S.~was supported by the research grant of Polish Ministry of
Science and Higher Education number \mbox{N N201
364436} for the years 2009--2012.

The research of B.C.\ was partly supported by the NSERC grant \linebreak RGPIN/341303-2007,
  the ANR grants Galoisint and Granma, and 
   a Marie Curie Transfer of Knowledge Fellowship of the European Community's Sixth Framework
Programme under contract MTKD-CT-2004-013389.

\bibliographystyle{alpha}

\bibliography{biblio-minimal}

\end{document}